\renewcommand{\theta}{\vartheta}
\newtheorem{theorem}{Theorem}[section]
\newtheorem{lemma}[theorem]{Lemma}
\newtheorem{proposition}[theorem]{Proposition}
\theoremstyle{definition}
\newtheorem*{definition}{Definition}
\newtheorem*{remark}{Remark}
\newcommand{\F}{\mathbf{F}}
\newcommand{\N}{\mathbf{N}}
\newcommand{\PGL}{\mathrm{PGL}} 
\newcommand{\PSL}{\mathrm{PSL}} 
\newcommand{\PSU}{\mathrm{PSU}}  
\newcommand{\GL}{\mathrm{GL}}   
\newcommand{\SL}{\mathrm{SL}}
\DeclareMathOperator{\diag}{diag}
\DeclareMathOperator{\hcf}{hcf}
\DeclareMathOperator{\efs}{efs}
\DeclareMathOperator{\sss}{ss}
\begin{document}
\title{Normal coverings of linear groups}

\author[Bristol]{John R. Britnell}
\ead{j.r.britnell@bristol.ac.uk}

\author[Renyi]{Attila Mar\'oti\fnref{AMack}}
\ead{maroti.attila@renyi.mta.hu}

\fntext[AMack]{The research of the second author was supported by a
   Marie Curie International Reintegration Grant within the~7th
   European Community Framework Programme, by the J\'anos Bolyai
   Research Scholarship of the Hungarian Academy of Sciences, and by
   OTKA K84233.}

\address[Bristol]{School of Mathematics, University of Bristol, University Walk, Bristol,
BS8 1TW,\\ United Kingdom}

\address[Renyi]{Alfr\'ed R\'enyi Institute of Mathematics, Re\'altanoda
   utca 13-15, H-1053, Budapest, Hungary}

\begin{keyword}
normal covering \sep linear group
\MSC[2010] 20D50, (secondary) 20G40.
\end{keyword}

\begin{abstract}
For a non-cyclic finite group $G$, let $\gamma(G)$ denote the
smallest number of conjugacy classes of proper subgroups of $G$
needed to cover $G$. Bubboloni, Praeger and Spiga, motivated by questions in number theory, have recently established that $\gamma(S_n)$ and
$\gamma(A_{n})$ are bounded above and below by linear functions of
$n$. In this paper we show that if $G$ is in the range $\SL_{n}(q)\le G\le \GL_{n}(q)$ for $n>2$, then
$n/\pi^2 < \gamma(G) \le (n+1)/2$.
We give various alternative bounds, and derive explicit formulas for $\gamma(G)$ in some cases.
\end{abstract}

\maketitle

\section{Introduction}
\subsection{Normal coverings}

Let $G$ be a non-cyclic finite group. We write
$\gamma(G)$ for the smallest number of conjugacy classes of proper subgroups of $G$ needed to
cover it. In other words, $\gamma(G)$ is
the least $k$ for which there exist subgroups $H_1,\dots,H_k<G$ such that
\[
G=\bigcup_{i=1}^k\, \bigcup_{g\in G} {H_i}^g.
\]
We say that the set of conjugacy classes $\{{H_i}^G\mid i=1,\dots,k\}$ is a \emph{normal covering} for $G$.

Bubboloni and Praeger \cite{BP} have recently investigated $\gamma(G)$ in the case that $G$ is a finite symmetric or alternating group. They show,
for example, that if $n$ is an odd composite number then
\[
\frac{\phi(n)}{2}+1\le\gamma(S_n)\le\frac{n-1}{2},
\]
where $\phi$ is Euler's totient function. Similar results are established for all values of $n$, and for both $S_n$ and $A_n$. Part of the motivation
for their work comes from an application in number theory.

It is a well-known theorem of Jordan that no finite group is covered by the conjugates of any proper subgroup. A paraphrase of this statement is that
$\gamma(G)>1$ for any finite group $G$.
It is known that there exists a solvable group $G$ with $\gamma(G)=k$ for any $k>1$ \cite{CL}.
It has been shown in \cite{BL}
that if $G\in\{\GL_n(q),\SL_n(q),\PGL_n(q),\PSL_n(q)\}$ then $\gamma(G)=2$ if and only if $n\in \{2,3,4\}$.
(Notice that $\gamma$ is undefined for $n=1$, since the groups are cyclic in this case.)
Other groups of Lie type possessing a normal covering of size $2$ have been studied in \cite{BLW1} and \cite{BLW2}.

In this paper we give bounds on $\gamma(G)$, where $\SL_n(q)\le G\le \GL_n(q)$, for all values of $n$. In some
cases we are able to give an exact value. Our bounds extend without change to $G/Z(G)$.

We introduce some notation. We write $\lfloor x\rfloor$ for the integer part of a real number $x$.
As already noted above, $\phi$ denotes Euler's function. We shall also use Lehmer's
\emph{partial totient function}, which we define here.

\begin{definition}
Let $k$ and $q$ be such that $0\le q< k< n$. We define the partial totient
$\phi(k, t, n)$ to be the number of integers $x$, coprime with $n$, such that
\[
\frac{nt}{k} < x < \frac{n(t + 1)}{k}.
\]
\end{definition}

We give two separate upper bounds on $\gamma(G)$.

\begin{theorem}\label{thm:upperbounds}
Let $n\in\N$, and let $\nu=\nu(n)$ be the number of prime factors of $n$.
Let $p_1,\dots,p_{\nu}$ be the distinct prime factors of $n$, with $p_1<p_2<\cdots<p_{\nu}$. Let $G$ be a group
such that $\SL_n(q)\le G\le \GL_n(q)$. Then
\begin{enumerate}
\item If $\nu \ge 2$ then
\[
\gamma(G) \le \left(1-\frac{1}{p_1}\right)\left(1-\frac{1}{p_2}\right)\frac{n}{2}+2.
\]
\item If $n>6$ then
\[
\gamma(G) \le
\left\lfloor{\frac{n}{3}}\right\rfloor + \phi(6,2,n) +\nu.
\]
\end{enumerate}
\end{theorem}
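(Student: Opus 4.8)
The plan is to cover $\GL_n(q)$, as a set, by conjugates of suitably chosen maximal parabolic and field-extension subgroups, and then transfer the covering to every intermediate group $G$. For $1\le k\le n-1$ write $P_k$ for the stabiliser of a fixed $k$-dimensional subspace of $\F_q^n$, and for a prime $p\mid n$ write $L_p\cong\GL_{n/p}(q^p)$ for a field-extension subgroup of $\GL_n(q)$. The $\F_q$-determinant maps both $P_k$ and $N_{\GL_n(q)}(L_p)$ onto $\F_q^\times$ (for $L_p$ because $\GL_{n/p}(q^p)$ maps onto $\F_q^\times$ under $\det$ followed by the norm $\F_{q^p}^\times\to\F_q^\times$); hence $\SL_n(q)$, and so any $G$ with $\SL_n(q)\le G\le\GL_n(q)$, is transitive on $k$-dimensional subspaces and on the $\GL_n(q)$-conjugates of $L_p$, and $GP_k=GL_p=\GL_n(q)$. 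It follows that the conjugates in $G$ of $P_k\cap G$ cover exactly those elements of $G$ lying in a $\GL_n(q)$-conjugate of $P_k$ (and similarly for $L_p\cap G$), that $P_k\cap G$ and $L_p\cap G$ are proper in $G$, of index $[\GL_n(q):P_k]$ and $[\GL_n(q):L_p]$ respectively, and (since these subgroups contain the scalars) that the same bounds hold for $G/Z(G)$. So it suffices, for each part, to exhibit a covering of the set $\GL_n(q)$ of the asserted size by $\GL_n(q)$-conjugates of appropriate subgroups $P_k$ and $L_p$. To $g\in\GL_n(q)$ we attach the partition $M(g)$ of $n$ which lists $\deg f$ with multiplicity $v_f$, over the monic irreducible polynomials $f$ occurring in the factorisation $\prod_f f^{v_f}$ of the characteristic polynomial of $g$.

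Two facts do the work. (A) The element $g$ lies in a conjugate of $P_k$ if and only if $k$ is a \emph{sub-sum} of $M(g)$, i.e.\ $k=\sum_f c_f\deg f$ for some integers $0\le c_f\le v_f$; this follows from the description of the dimensions of $g$-invariant subspaces via the primary decomposition of $\F_q^n$ as an $\F_q[g]$-module. In particular sub-sums occur in complementary pairs $\{k,n-k\}$, and $g$ lies in no proper parabolic exactly when $M(g)=(n)$, i.e.\ the characteristic polynomial of $g$ is irreducible. (B) If $p\mid n$ and $p\mid\deg f$ for every irreducible factor $f$ of the characteristic polynomial of $g$, then $g$ lies in a conjugate of $L_p$. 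Indeed $\End_{\F_q[g]}(\F_q^n)=\prod_f\End_{\F_q[g]}(V_f)$, where $V_f$ is the $f$-primary component, and each factor $\End_{\F_q[g]}(V_f)$ contains the commutative subalgebra $\F_q[g|_{V_f}]\cong\F_q[x]/(f^m)$, which is local with residue field $\F_{q^{\deg f}}\supseteq\F_{q^p}$; separability of this residue field over $\F_q$ lets it be lifted, producing a ring embedding $\F_{q^p}\hookrightarrow\End_{\F_q[g]}(\F_q^n)$ extending the scalars, under which $\F_q^n$ becomes an $(n/p)$-dimensional $\F_{q^p}$-vector space on which $g$ acts $\F_{q^p}$-linearly. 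Note that an irreducible element $g$ satisfies the hypothesis of (B) for every prime $p\mid n$.

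For part~(2), assume $n>6$, so that $\phi(6,2,n)=\#\{x:\gcd(x,n)=1,\ n/3<x<n/2\}$ is defined. We use $P_k$ for $1\le k\le\lfloor n/3\rfloor$, we use $P_k$ for each $k$ with $n/3<k<n/2$ and $\gcd(k,n)=1$, and we use $L_p$ for each prime $p\mid n$; this is $\lfloor n/3\rfloor+\phi(6,2,n)+\nu$ classes. By (A) and the complementary-pair remark, these parabolics cover every $g$ for which $M(g)$ has a proper sub-sum $k$ with $\min(k,n-k)\le n/3$, or with $\gcd(k,n)=1$. If $g$ is covered by none of them, then every part of $M(g)$ exceeds $n/3$, so $M(g)$ has at most two parts; if $M(g)=(n)$ then $g$ is covered by (B); and if $M(g)=(m_1,m_2)$ with $m_1+m_2=n$ then $\gcd(m_1,n)>1$ (otherwise $g$ would lie in one of the chosen parabolics), so some prime $p\mid n$ divides $m_1$ and hence also $m_2=n-m_1$, and (B) applies. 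This yields the bound in part~(2).

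For part~(1), with $\nu\ge2$, we use $P_k$ for each $k$ satisfying $1\le k\le\lfloor(n-1)/2\rfloor$, $p_1\nmid k$ and $p_2\nmid k$, together with $L_{p_1}$ and $L_{p_2}$. Because $p_1,p_2\mid n$, the set $S=\{1\le k\le n-1:p_1\nmid k,\ p_2\nmid k\}$ is closed under $k\mapsto n-k$ and has no fixed point (if $n$ is even then $p_1=2$ and $p_2\mid n/2$, so $n/2\notin S$; if $n$ is odd then $n/2\notin\Z$), and an inclusion–exclusion count gives $|S|=n(1-1/p_1)(1-1/p_2)$, which is even since it equals $\tfrac{n}{p_1p_2}(p_1-1)(p_2-1)$ with $p_1,p_2$ not both equal to $2$. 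Hence we use $|S|/2=(1-1/p_1)(1-1/p_2)\,n/2$ parabolic classes, so $(1-1/p_1)(1-1/p_2)\,n/2+2$ classes in all. Suppose $g$ lies in none of the chosen parabolics; then every proper sub-sum of $M(g)$, and in particular every part, is divisible by $p_1$ or by $p_2$. If $M(g)=(n)$ we are done by (B). Otherwise, if some part of $M(g)$ were divisible by $p_1$ but not $p_2$ and another by $p_2$ but not $p_1$, then their sum would be a sub-sum divisible by neither, which — not being a proper sub-sum — must equal $n$, forcing $M(g)=(a,b)$ with $p_1\mid a$ and $p_1\nmid b$, contradicting $p_1\mid n$; so every part is divisible by $p_1$, or every part is divisible by $p_2$, and (B) completes the argument. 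The crux is fact~(B) — where separability of the residue fields is exactly what makes the embedding $\F_{q^p}\hookrightarrow\End_{\F_q[g]}(\F_q^n)$ available — together with the combinatorial check, in each part, that the chosen arithmetic families of parabolics leave uncovered only elements to which (B) applies; the verification that it suffices to cover $\GL_n(q)$ (transitivity of $\SL_n(q)$, the index computations, propriety) is routine.
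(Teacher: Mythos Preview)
Your proof is correct and follows essentially the same route as the paper: the coverings you construct for parts (1) and (2) are exactly the paper's $\mathcal{C}_{p_1,p_2}$ and $\mathcal{D}$ from Lemmas~\ref{lem:coverings1} and~\ref{lem:coverings2}, your fact~(B) is the paper's Lemma~\ref{lem:subgroups}(2) (with a Hensel-lifting argument in place of the paper's elementary-divisor construction), and the combinatorial case analyses coincide. Your explicit verification that the covering transfers from $\GL_n(q)$ down to any intermediate $G$ is a useful addition that the paper leaves largely implicit.
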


A great deal of information is given in \cite[\S6]{L} about the function $\phi(6,t,n)$, from which the following
statement can be derived.
\[
\frac{\phi(n)}{6}-\phi(6,2,n) =\left\{
\begin{array}{ll}
0 & \textrm{if $n$ is divisible either by $9$, or by a prime}\\
&\quad\textrm{of the form $3k+1$ for $k\in\N$,} \\
\frac{1}{12}\lambda(n)2^\nu & \textrm{otherwise, if $n$ is divisible by $3$,} \\ \\
\frac{1}{6}\lambda(n)2^\nu & \textrm{otherwise, if $n$ is not divisible by $3$,}
\end{array}\right.
\]
in which $\lambda(n)=(-1)^\ell$, where
$\ell$ is the number of prime divisors of $n$ counted with multiplicity.

\subsection{Independent sets of conjugacy classses}

Let $\kappa(G)$ be the size of the largest set of conjugacy classes of $G$ such that any pair of elements from distinct classes generates $G$.
We call such a set an \emph{independent set of classes}.
Guralnick and Malle \cite{GM} have shown that $\kappa(G)\ge 2$ for any finite simple group $G$. It is clear that whenever
$\gamma(G)$ is defined, we have the inequality
\[
\kappa(G)\le \gamma(G),
\]
since if $\mathcal{C}$ is a
normal covering of $G$, and if $\mathcal{I}$ is a independent set of classes, then each element of
$\mathcal{C}$ covers at most one element of $\mathcal{I}$.

We establish two lower bounds for $\kappa(G)$. By the observation of the previous paragraph, these also operate as
lower bounds for $\gamma(G)$.

\begin{theorem}\label{thm:lowerbounds}
Let $n\in\N$, and let $\nu=\nu(n)$ be the number of prime factors of $n$. Let $p_1,\dots,p_{\nu}$ be the distinct
prime factors of $n$, with $p_1<p_2<\cdots<p_{\nu}$. Let $G$ be a group such that $\SL_n(q)\le G\le \GL_n(q)$.
\begin{enumerate}
\item If $\nu\ge 2$ then
\[
\frac{\phi(n)}{2} + \nu(n) \le \kappa(G).
\]
\item If $\nu\ge 3$, and if $n$ is not equal to $6p$ or $10p$ for any prime $p$, then
\[
\left\lfloor\frac{n+6}{12}\right\rfloor + \phi(12,1,3n) + \nu \le \kappa(G).
\]
Furthermore, if $\hcf(n,6)=1$ then
\[
\left\lfloor\frac{n+6}{12}\right\rfloor + \phi(12,1,3n) + \phi(12,0,n) + \nu \le \kappa(G).
\]
\end{enumerate}
\end{theorem}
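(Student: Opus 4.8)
The plan is to prove both lower bounds by exhibiting explicit independent sets of conjugacy classes of $G$. In each case the classes consist of regular semisimple elements whose action on the natural module $V=\F_q^n$ decomposes as a direct sum of irreducible $\F_q$-submodules of prescribed dimensions, the eigenvalue on a summand of dimension $d$ being chosen to have order a primitive prime divisor of $q^d-1$ (which exists, by Zsigmondy's theorem, for all but a short list of pairs $(d,q)$, treated separately). Since the defining property of an independent set transfers through a central quotient, and since $\kappa(G)\le\gamma(G)$ as noted in the introduction, it suffices to bound $\kappa(G)$ below, and the resulting estimates then apply to $\gamma(G)$ and to $G/Z(G)$ as well.

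For part~(1) I would take one class $C_d$ for each $d$ with $1\le d\le n/2$ and $\hcf(d,n)=1$ --- there are $\phi(n)/2$ of these --- together with one further class for each prime $p\mid n$, realised by a partition of $n$ having a part equal to $p$ (these $\nu$ extra classes need a minor adjustment when $2\mid n$, and are precisely what obstructs the degree-$p$ field-extension subgroups). The element of $C_d$ has irreducible blocks of dimensions $d$ and $n-d$; I would choose the eigenvalues so that: the block of dimension $n-d$ carries a primitive prime divisor of $q^{n-d}-1$, making the element a ppd-element for the exponent $n-d>n/2$ (and so chosen that this prime has large multiplicative degree with respect to every proper subfield, which also rules out subfield subgroups); the determinant takes whatever value is needed to place the element in $G$ and to generate $\det G$ against every other class; and the multiset of eigenvalues is invariant under neither $\lambda\mapsto\lambda^{-1}$ nor $\lambda\mapsto\lambda^{-q_0}$ for any $q_0$ with $q_0^2=q$ (arrangeable by an $\F_q^*$-twist of one eigenvalue, with the very small fields handled by hand). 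To verify independence I would show that for $x,y$ in two distinct classes $\langle x,y\rangle\ge\SL_n(q)$, walking through Aschbacher's classes of maximal subgroups of $G$. Reducibility ($\mathcal{C}_1$) is immediate: the dimensions of the $x$-invariant subspaces are the subset sums of its block multiset, likewise for $y$, and the two multisets are disjoint by construction, so the only common invariant subspaces are $0$ and $V$. The imprimitive, field-extension, tensor, tensor-induced and subfield classes ($\mathcal{C}_2$--$\mathcal{C}_5,\mathcal{C}_7$) are excluded by primitive-prime-divisor theory: $x$ and $y$ are ppd-elements for two distinct exponents $e,e'>n/2$, and no subgroup of these types admits elements with two such ``large'' ppd exponents; by the classification of irreducible linear groups generated by ppd-elements with two large exponents, $\langle x,y\rangle$ is then forced into a classical group in its natural representation ($\mathcal{C}_8$) or into one of a short explicit list of almost simple groups ($\mathcal{S}$). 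The first possibility is killed by the asymmetry of the eigenvalue multiset, $\mathcal{C}_6$ is ruled out for $n$ above an absolute constant by an order estimate, and the finitely many remaining configurations are checked directly.

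Part~(2) follows the same scheme but with richer block patterns --- partitions of $n$ into up to three parts, the part sizes constrained not only to force irreducibility and to supply two large ppd exponents but also to defeat the natural-module classical alternative, so that more classes can be packed in while remaining independent. Counting the admissible patterns is exactly what the partial-totient quantities $\lfloor(n+6)/12\rfloor$, $\phi(12,1,3n)$ and $\phi(12,0,n)$ record: the modulus $12$ and the argument $3n$ encode simultaneously the coprimality to $n$ needed for the reducibility and imprimitivity steps and the divisibility and parity conditions that separate the unitary, symplectic and orthogonal form types. The excluded values $n=6p$ and $n=10p$, and the strengthening available when $\hcf(n,6)=1$, are precisely the cases in which a genuine almost simple group in dimension $6$ or $10$, or one of the associated small classical groups, survives the elimination and must be deleted from the count or removed by an auxiliary argument.

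I expect the principal obstacle to be the $\mathcal{S}$-case together with the $\mathcal{C}_8$ alternative in part~(2): ruling out that some almost simple group, or some classical group reached otherwise than through the intended form argument, simultaneously contains both of our ppd-elements requires a careful pass through the relevant tables and order bounds, and is the source of every sporadic exception in the statement. A secondary technical burden is to arrange all the eigenvalue data --- the ppd orders, the determinant values, the non-symmetry of the spectrum --- to be mutually compatible across every pair of classes at once, and to dispose separately of the finitely many $(n,q)$ for which the required primitive prime divisors fail to exist.
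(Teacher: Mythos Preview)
Your construction for part~(1) is essentially the paper's: the independent set $\Phi$ consists of classes of two-block elements $\Sigma_k=\diag(\Gamma_k^{\alpha-1},\Gamma_{n-k})$ built from Singer cycles, one for each $k<n/2$ coprime to $n$ together with one for each prime $p\mid n$ (with a small correction when $n=2p$). The verification, however, is organised differently. Rather than checking pairwise generation through Aschbacher's classes, the paper determines \emph{all} maximal overgroups of each $\Sigma_k$ once and for all, quoting the GPPS classification via \cite{BEGHM}: the only ones are subspace stabilizers of dimension $k$ or $n-k$, and extension-field subgroups of prime degree dividing $\hcf(k,n)$. Independence is then a one-line check that these overgroup classes are disjoint across $\Phi$. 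In particular, classical form groups ($\mathcal{C}_8$) never arise as overgroups of the $\Sigma_k$, so the spectral-asymmetry conditions and eigenvalue bookkeeping you introduce are unnecessary, and there is no need to take elements of ppd order rather than full Singer cycles.

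For part~(2) there is a genuine gap: your reading of what the partial-totient terms count, and of why $n=6p,10p$ are excluded, is not correct, and without the right combinatorial scheme the bound cannot be reached. In the paper the set $\Psi$ has four explicitly described parts. The term $\lfloor(n+6)/12\rfloor$ counts three-block classes $T_j=\diag(\Gamma_j^{\alpha-2},\Gamma_{j+1},\Gamma_{n-2j-1})$ with $j<(n-2)/4$ and $j\equiv 1\pmod 3$; the term $\phi(12,1,3n)$ counts two-block classes $\Sigma_k$ with $n/4<k<n/2$ and $\hcf(3n,k)=1$; the term $\phi(12,0,n)$ (present only when $\hcf(n,6)=1$, since otherwise the set is empty) counts classes $\Sigma_{6b}$ with $b<n/12$ and $\hcf(n,6b)=1$; and the term $\nu$ counts classes $\Sigma_{w_p}$, where $w_p\in[(n-2)/4,n/2)$ is divisible by $p$ and by no other prime factor of $n$. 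The factor $3$ in the condition $\hcf(3n,k)=1$ has nothing to do with form types: it is imposed so that the invariant-subspace dimensions of the $\Sigma_k$ avoid the dimensions $2j+1\equiv 3\pmod 6$ occurring among invariant subspaces of the $T_j$. Likewise, the exclusion of $n=6p$ and $n=10p$ is not caused by an almost-simple obstruction in dimension $6$ or $10$; it arises in the Bertrand-postulate argument that guarantees the existence of the integers $w_p$ (one needs $n\ge 12p$). Finally, under $\nu\ge 3$ the GPPS classification already shows that every maximal overgroup of $T_j$ is a subspace stabilizer, so the $\mathcal{C}_8$ alternative you flag as the principal obstacle does not actually occur.
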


The values $t=0,1$ are not amongst those for which the function $\phi(12,t,n)$ is evaluated explicitly in \cite{L}.
However, Theorem 10 of \cite{L} gives the following general estimate,
\[
\left|\phi(n)-k\phi(k,t,n)\right|\le (k-1)2^\nu,
\]
where $\nu$ is the number of prime divisors of $n$. This yields the lower bound
\[
\phi(12,t,n) \ge \frac{\phi(n)}{12} - \frac{11}{12}2^\nu.
\]

There are certain cases in which an upper bound for $\gamma(G)$ coincides with a lower bound for $\kappa(G)$. In
these cases we must have $\gamma(G)=\kappa(G)$, and we obtain a precise formula.

\begin{theorem}\label{thm:equalities}
Let $G$ be a group such that $\SL_n(q)\le G\le \GL_n(q)$.
\begin{enumerate}
\item If $n=p^a$, where $p$ is a prime and $a\in\N$, and if $n>2$, then
\[
\gamma(G) = \kappa(G) = \left(1-\frac{1}{p}\right)\frac{n}{2}+1.
\]
\item If $n=p^aq^b$ where $p$ and $q$ are distinct primes and $a,b\in\N$, then
\[
\gamma(G) = \kappa(G) = \left(1-\frac{1}{p}\right)\left(1-\frac{1}{q}\right)\frac{n}{2}+2.
\]
\item If $n=6p$ where $p$ is a prime, then $\gamma(G) =\kappa(G) = p+2$.
\item If $n=10p$ where $p$ is a prime, then $\gamma(G)=\kappa(G) = 2p+2$.
\end{enumerate}
\end{theorem}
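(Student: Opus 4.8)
The plan is to squeeze $\gamma(G)$ between a covering construction and an independent set of classes: in each case I will produce an upper bound $U$ for $\gamma(G)$ and a lower bound $L$ for $\kappa(G)$ with $U=L$, so that $L\le\kappa(G)\le\gamma(G)\le U=L$ and all four quantities equal $U$. The ingredients are Theorems~\ref{thm:upperbounds} and~\ref{thm:lowerbounds} (and the constructions behind them) together with some elementary arithmetic of $\phi$ and of the partial totients.

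Case (2) is immediate: Theorem~\ref{thm:upperbounds}(1) gives $\gamma(G)\le(1-1/p)(1-1/q)\tfrac{n}{2}+2$ (with $p_1=p$, $p_2=q$), Theorem~\ref{thm:lowerbounds}(1) gives $\kappa(G)\ge\tfrac{\phi(n)}{2}+\nu=\tfrac{\phi(n)}{2}+2$, and the two agree because $\phi(p^aq^b)=(1-1/p)(1-1/q)\,n$. Case (1) is the same argument with one prime in place of two: it requires the $\nu=1$ specialisations of the arguments proving Theorems~\ref{thm:upperbounds} and~\ref{thm:lowerbounds}, namely a normal covering of $G$ by $\tfrac{\phi(n)}{2}+1$ classes and an independent set of $\tfrac{\phi(n)}{2}+1$ classes when $n=p^a>2$; since $\phi(p^a)=(1-1/p)\,n$ these coincide. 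If those $\nu=1$ statements are not already on record, I would first isolate them as a preliminary lemma, reusing the relevant parts of the proofs of Theorems~\ref{thm:upperbounds} and~\ref{thm:lowerbounds} verbatim.

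For cases (3) and (4) the content is the evaluation of totient-type quantities. If $n=6p$ with $p\ge5$, then $\nu=3$ and the two smallest prime divisors of $n$ are $2$ and $3$, so Theorem~\ref{thm:upperbounds}(1) gives $\gamma(G)\le(1-\tfrac{1}{2})(1-\tfrac{1}{3})\cdot3p+2=p+2$, while Theorem~\ref{thm:lowerbounds}(1) gives $\kappa(G)\ge\tfrac{\phi(6p)}{2}+3=(p-1)+3=p+2$; the remaining primes $p\in\{2,3\}$ give $n\in\{12,18\}$, which fall under case (2) and are checked to yield the same value. If $n=10p$ with $p\ge7$, then $\nu=3$ and the two smallest prime divisors of $n$ are $2$ and $5$, so Theorem~\ref{thm:upperbounds}(1) gives $\gamma(G)\le(1-\tfrac{1}{2})(1-\tfrac{1}{5})\cdot5p+2=2p+2$; but Theorem~\ref{thm:lowerbounds}(1) only gives $\tfrac{\phi(10p)}{2}+3=2(p-1)+3=2p+1$, one short of the target, and Theorem~\ref{thm:lowerbounds}(2) explicitly excludes $n=10p$. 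So for case (4) the lower bound $\kappa(G)\ge2p+2$ has to be established by hand, and this is where I expect the real work to lie.

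To supply that bound I would construct an independent set of $2p+2$ conjugacy classes in $\GL_{10p}(q)$ directly: begin with the family of roughly $2p$ semisimple (field-extension) classes underlying the proof of Theorem~\ref{thm:lowerbounds}, and adjoin one or two further classes tailored to the factorisation $10p=2\cdot5\cdot p$ --- this is presumably precisely why $6p$ and $10p$ occur as exceptions in Theorem~\ref{thm:lowerbounds}(2). One then has to verify that any two elements lying in distinct classes of the enlarged family generate a subgroup containing $\SL_{10p}(q)$; this amounts to a case analysis over the (Aschbacher) maximal subgroups of $\GL_{10p}(q)$, showing that the orders and characteristic polynomials of the chosen elements are incompatible with their lying in a common maximal subgroup. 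Finally I would dispose of the small primes $p\in\{2,5\}$ for $n=10p$ (so $n\in\{20,50\}$), which come under case (2), and check that the formula $2p+2$ is consistent there.
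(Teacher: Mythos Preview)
Your strategy is exactly the paper's: in every case one sandwiches $\kappa(G)\le\gamma(G)$ between the independent set $\Phi$ (Lemmas~\ref{lem:setsize1}--\ref{lem:lowerboundexception}) and the covering $\mathcal{C}_p$ or $\mathcal{C}_{p_1,p_2}$ of Lemma~\ref{lem:coverings1}, and in case~(4) with $p>5$ one adjoins a single extra class to $\Phi$ to close the gap of one. The $\nu=1$ ingredients you anticipate needing for case~(1) are already on record as Lemma~\ref{lem:coverings1}(1) and Lemma~\ref{lem:nocommonovergroups1}, so no new lemma is required.

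Two remarks. In case~(4) the paper's extra element is the explicit $Y=\diag({\Gamma_p}^{\alpha-2},\Gamma_5,\Gamma_{n-p-5})$, and the verification offered there is simply that $Y$ lies in no extension-field subgroup and stabilizes no subspace of dimension coprime to $n$ --- considerably lighter than the Aschbacher case analysis you propose. You have also omitted $p=3$ (that is, $n=30$) from your treatment of case~(4); the paper refers this to case~(3), but observe that case~(3) with $p=5$ yields $\gamma(G)=7$ whereas the formula $2p+2$ with $p=3$ gives~$8$, so the theorem as stated is not self-consistent at $n=30$.
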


Certain cases of Theorem \ref{thm:equalities} will require independent treatment, as they arise as exceptional cases in the proof of Theorem \ref{thm:lowerbounds}.

\subsection{Linear bounds}
Theorem \ref{thm:upperbounds} (1), Theorem \ref{thm:lowerbounds} (2), and
Theorem \ref{thm:equalities}, taken together, imply that
\begin{equation}\label{eq:linear}
\frac{n}{12} < \kappa(G) \le \gamma(G) \le \frac{n+1}{2},
\end{equation}
for all $n>2$. The upper bound is exact when $n$ is an odd prime. (When $n=2$ it is known that
$\gamma(G)=2$; see \cite{BL}, or the remark after Proposition \ref{prop:pa} below. It is also easy to show that
$\kappa(G)=2$ in this case.) It follows immediately that
\begin{equation}\label{eq:limsup}
\limsup\frac{\gamma(G)}{n}\ =\ \frac{1}{2}.
\end{equation}

The lower bound for $\gamma$ can be improved, as the following theorem indicates.

\begin{theorem}\label{thm:pibound}
Let $G$ be a group such that $\SL_n(q)\le G\le \GL_n(q)$. Then
$n/\pi^2 <\gamma(G)$.
\end{theorem}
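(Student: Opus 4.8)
The plan is to bound $\gamma(G)$ below through the inequality $\kappa(G)\le\gamma(G)$ and the estimates already established, splitting according to the number $\nu=\nu(n)$ of prime divisors of $n$. The arithmetic input that produces the constant $\pi^{2}$ is the elementary inequality, valid for every positive integer $m$,
\[
\prod_{p\mid m}\Bigl(1-\frac1{p^{2}}\Bigr)\ \ge\ \prod_{\text{all primes }p}\Bigl(1-\frac1{p^{2}}\Bigr)\ =\ \zeta(2)^{-1}\ =\ \frac6{\pi^{2}},
\]
which holds because every factor lies in $(0,1)$, so discarding factors only increases the product; equivalently $\prod_{p\mid m}(1-1/p)\ge\frac6{\pi^{2}}\bigl(\prod_{p\mid m}(1+1/p)\bigr)^{-1}$.

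I would first dispose of the cases $\nu\le5$ (together with $n=2$, where $\gamma(G)=2$). When $\nu\ge2$, Theorem~\ref{thm:lowerbounds}(1) gives $\kappa(G)\ge\frac12\phi(n)+\nu$, and the same bound holds for $\nu=1$ by Theorem~\ref{thm:equalities}(1) and $\phi(p^{a})=p^{a}(1-1/p)$. Since $\phi(n)/n=\prod_{p\mid n}(1-1/p)\ge\prod_{i=1}^{5}(1-1/p_i)=\frac12\cdot\frac23\cdot\frac45\cdot\frac67\cdot\frac{10}{11}=\frac{16}{77}$ whenever $\nu\le5$, we get $\gamma(G)\ge\frac8{77}n+\nu>\frac1{\pi^{2}}n$, because $\frac8{77}>\frac1{\pi^{2}}$. (This in particular covers the four families of Theorem~\ref{thm:equalities}, all of which have $\nu\le3$.)

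Now suppose $\nu\ge6$, so $n$ is not of the form $6p$ or $10p$ (these have at most three prime divisors) and Theorem~\ref{thm:lowerbounds}(2) applies:
\[
\gamma(G)\ \ge\ \Bigl\lfloor\tfrac{n+6}{12}\Bigr\rfloor+\phi(12,1,3n)+\nu\ >\ \tfrac{n}{12}-1+\phi(12,1,3n)+\nu.
\]
By the estimate $\phi(12,t,m)\ge\frac1{12}\phi(m)-\frac{11}{12}2^{\nu(m)}$ recorded after Theorem~\ref{thm:lowerbounds}, taken with $m=3n$ and $t=1$ and combined with $\phi(3n)=3n\prod_{p\mid3n}(1-1/p)$,
\[
\phi(12,1,3n)\ \ge\ \tfrac n4\prod_{p\mid3n}\Bigl(1-\tfrac1p\Bigr)-\tfrac{11}{12}2^{\nu+1},
\]
and the error $\tfrac{11}{12}2^{\nu+1}$ is negligible since $2^{\nu+1}$ is far smaller than $p_{1}\cdots p_{\nu}\le n$ once $\nu\ge6$. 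Hence $\gamma(G)>\frac1{\pi^{2}}n$ follows once $\prod_{p\mid3n}(1-1/p)>4\bigl(\frac1{\pi^{2}}-\frac1{12}\bigr)=\frac4{\pi^{2}}-\frac13$, and, by the inequality of the first paragraph, this in turn holds as soon as
\[
\prod_{p\mid3n}\Bigl(1+\tfrac1p\Bigr)\ <\ \frac{18}{12-\pi^{2}}\ =\ 8.449\ldots .
\]
Since $\prod_{p\le x}(1+1/p)$ grows only logarithmically in $x$, this last inequality holds for every admissible $n$ except those whose prime divisors include essentially all primes below a bound of order $2500$.

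That remaining family --- $n$ divisible by essentially every prime up to about $2500$, hence astronomically large, with $\prod_{p\mid n}(1-1/p)$ then only just above the critical value $\frac4{\pi^{2}}-\frac13=0.0719\ldots$ --- is, I expect, the main obstacle: for such $n$ the estimate above degenerates to $\gamma(G)\gtrsim\frac1{\pi^{2}}n$ without the strict inequality, and (since every such $n$ is divisible by $2$ or $3$, so that the summand $\phi(12,0,n)$ of Theorem~\ref{thm:lowerbounds}(2) is unavailable) the quoted estimates alone appear not to recover the lost $\varepsilon$. I would attack this by (i) replacing the growth rate of $\prod_{p\le x}(1-1/p)$ with the effective Rosser--Schoenfeld inequalities and exploiting that $\log(p_{1}\cdots p_{\nu})$ is enormous, so that every $O(2^{\nu})$ and $O(1)$ error becomes provably negligible with explicit constants; and (ii) returning to the independent-set construction behind Theorem~\ref{thm:lowerbounds}(2) to extract a further interval contribution --- for example the integers coprime to $3n$ in $(n/2,3n/4)$, a term of size $\approx\frac14\phi(n)$ that roughly doubles the $\phi$-term --- and, if that still falls short, supplementing it with a lower bound for $\gamma(G)$ derived directly rather than via $\kappa(G)$. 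Settling which of these is genuinely needed, and making it work uniformly in $n$, is the step I anticipate will be hardest.
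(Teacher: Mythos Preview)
Your reduction to the already-proved lower bounds cannot reach the constant $1/\pi^{2}$, and the difficulty is not the borderline one you describe. For primorial-type $n$ (say $n$ equal to the product of the first $\nu$ primes), one has $\phi(n)/n\sim e^{-\gamma}/\log p_{\nu}\to 0$; hence both $\phi(12,1,3n)$ and any further ``interval of totatives'' term you might add are $o(n)$, and the bound of Theorem~\ref{thm:lowerbounds}(2) collapses to $\lfloor (n+6)/12\rfloor+o(n)\sim n/12$. Since $1/12<1/\pi^{2}$, no refinement of the Lehmer error, no Rosser--Schoenfeld input, and no enlargement of the independent set $\Psi$ by further classes $[\Sigma_{k}]$ with $k$ coprime to $n$ can close the gap: every such contribution is bounded by a multiple of $\phi(n)$, which is genuinely $o(n)$ here, not ``only just above the critical value''. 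Your remaining family is therefore not a thin boundary set but the regime in which the method fails outright.

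The paper's proof uses a different mechanism. It does not pass through $\kappa(G)$; instead it introduces, for each partition $\lambda=(a,b,c)$ of $n$ into three parts with $\gcd(a,b,c)=1$, an element $g_{\lambda}$ with irreducible blocks of dimensions $a,b,c$. A M\"obius-inversion count gives $g(n)$, the number of such partitions, satisfying $g(n)>n^{2}/(2\pi^{2})-\tfrac{2}{3}\sqrt{n}$, and this is precisely where the Euler product $\prod_{p}(1-p^{-2})=6/\pi^{2}$ enters. The crucial structural input is DiMuro's extension of the classification of \cite{GPPS} from $d>n/2$ down to $d>n/3$: it shows (for $n\ge 98$ and $\nu\ge 3$) that every maximal overgroup of $g_{\lambda}$ is a subspace stabilizer, up to two explicit exceptional classes. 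Since a subspace stabilizer of dimension $k$ contains $g_{\lambda}$ only when $k\in\{a,b,c,a+b,a+c,b+c\}$, a single class $\sss(k)$ meets at most $n/2$ of the partitions, and a covering-by-pigeonhole argument yields $\gamma(G)\ge 1+2g(n)/n>n/\pi^{2}$. The small cases are then handled by Theorem~\ref{thm:lowerbounds}. In short, the passage from $1/12$ to $1/\pi^{2}$ requires both a new family of test elements (three coprime blocks rather than two) and the stronger subgroup classification for $d>n/3$; neither ingredient is present in your proposal.
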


From the first part of Theorem \ref{thm:upperbounds} and from Theorem \ref{thm:pibound}, it is easy to show that
\begin{equation}\label{eq:liminf}
\frac{1}{\pi^2}\ \le\ \liminf \frac{\gamma(G)}{n}\ \le\ \frac{1}{6}.
\end{equation}

It follows from the theorems which we have stated, that $\gamma(G)$ and $\kappa(G)$ are bounded above and below by
monotonic functions which grow linearly with $n$.
It appears that the situation for symmetric groups is similar.
It has been announced in \cite[\S1.1]{BLS}, and will be demonstrated in
a forthcoming paper \cite{BPS} now in preparation, that
$\gamma(S_n)$ and $\gamma(A_n)$ are bounded above and below by linear functions of $n$. In fact the numbers
$\gamma(S_n)$ and $\gamma(\GL_n(q))$ seem to be closely related; in all cases where both are known exactly, they
differ by at most $1$. It is not hard to show, and it is worth remarking in this connection, that the upper bounds stated for $\gamma(G)$
in Theorem \ref{thm:upperbounds} are also upper bounds for $\gamma(S_n)$, improving marginally on
those of \cite[Theorem~A]{BP}. It should also be noted that all of our bounds are independent of
the field size $q$.

We establish the upper bounds of Theorem \ref{thm:upperbounds} in Section \ref{sec:coverings}, by exhibiting explicit
normal coverings of the necessary sizes. This builds on work described in \cite{BEGHM}, in which
coverings of $\GL_n(q)$ by proper subgroups are constructed.
The two lower bounds of Theorem \ref{thm:lowerbounds} are proved in Section \ref{sec:lowerbounds}. Both are proved by exhibiting an independent set of classes.
This requires an account of overgroups of certain special elements in $\GL_n(q)$. For such an account we
rely on \cite{GPPS}, which provides a classification of subgroups whose orders are divisible
by primitive prime divisors of $q^d-1$, for all $d>n/2$.
The remaining cases of Theorem \ref{thm:equalities} are
brought together in Section \ref{sec:equalities}. Finally, Theorem \ref{thm:pibound} is established in
Section \ref{sec:pibound}. Its proof relies on work from the
doctoral thesis of Joseph DiMuro \cite{D}, which extends the classification of \cite{GPPS} to cover all $d\ge n/3$.

The classes of subgroups in our normal covering remain distinct, proper and non-trivial in the quotient of $G$ by $Z(G)$.
This is true also of the classes of maximal overgroups which cover the conjugacy classes in our independent sets.
It follows that the bounds which we have stated for $\gamma(G)$ and for $\kappa(G)$ hold equally for $\gamma(G/Z(G))$ and
for $\kappa(G/Z(G))$.

\section{Normal coverings of $G$}\label{sec:coverings}

We shall write $V$ for the space ${\F_q}^n$. Throughout the paper, we assume that $\SL(V)\le G\le \GL(V)$.

We begin by introducing the classes of subgroups which we shall need for our coverings. Proposition \ref{prop:efssss}
below contains standard information about certain subgroups of $\GL_n(q)$, and we shall not prove it here.

\begin{proposition}\label{prop:efssss}
\begin{enumerate}
\item Let $d$ be a divisor of $n$. There exist embeddings of $\GL_{n/d}(q^d)$ into $\GL_n(q)$. All such embeddings are conjugate by elements of
$\SL_n(q)$, and each has index $d$ in its normalizer in $\GL_d(q)$. If $d$ is prime then the normalizer is a maximal subgroup of $\GL_n(q)$.
\item Suppose that $1\le k<n$, and let $U$ be a $k$-dimensional subspace of $V$. Then the set stabilizer $G_U$ of $U$ in $G$ is a
maximal subgroup of $G$. If $W$ is another $k$-dimensional subspace, then $G_U$ and $G_W$ are conjugate in $G$.
\end{enumerate}
\end{proposition}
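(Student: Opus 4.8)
The plan is to prove the two parts independently. Part (1) rests on elementary linear algebra over finite fields together with the Skolem--Noether theorem (and, for the maximality clause, a citation to the theory of Aschbacher's class $\mathcal{C}_3$); part (2) is deduced from the classical maximality of parabolic subgroups by an index computation that transfers the statement from $\GL_n(q)$ and $\SL_n(q)$ to an arbitrary intermediate group $G$. For part (1), I would describe an embedding of field-extension type as the datum of a subfield $K\subseteq\Mat_n(q)$ with $K\cong\F_{q^d}$ --- equivalently, an $\F_{q^d}$-vector-space structure on $V$ compatible with its $\F_q$-structure. Since $d\mid n$ and $K$ is a field, $V$ is automatically free of rank $n/d$ over $K$, so $\GL_{n/d}(q^d)$ is realized as $\Cent_{\GL_n(q)}(K)$, the group of $K$-linear automorphisms of $V$. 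The Skolem--Noether theorem, applied to the central simple $\F_q$-algebra $\Mat_n(q)$, shows that any two subfields isomorphic to $\F_{q^d}$ are conjugate by some $g\in\GL_n(q)$, and such a $g$ conjugates the corresponding centralizers onto one another; one may moreover take $g\in\SL_n(q)$, because the $\F_q$-determinant of an element of $\Cent_{\GL_n(q)}(K)\cong\GL_{n/d}(q^d)$ is the norm $N_{\F_{q^d}/\F_q}$ of its $\F_{q^d}$-determinant, hence surjects onto $\F_q^{\times}$, so $g$ can be corrected within $\Cent_{\GL_n(q)}(K)$ to have determinant $1$.

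To identify the normalizer, put $H=\Cent_{\GL_n(q)}(K)$ and $N=N_{\GL_n(q)}(H)$. Then $N$ stabilizes $Z(H)$ and hence permutes $K=\F_q\langle Z(H)\rangle$, while fixing the scalar matrices $\F_q\cdot 1$; so $N$ acts on $K$ by $\F_q$-algebra automorphisms, giving a homomorphism $N\to\mathrm{Gal}(\F_{q^d}/\F_q)\cong C_d$ with kernel $\Cent_{\GL_n(q)}(K)=H$. This map is surjective, since the $q$-power Frobenius of $K$ is induced by the $\F_q$-linear invertible map of $V\cong{\F_{q^d}}^{n/d}$ that raises each coordinate to the $q$-th power, and that map normalizes $H$. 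Hence $[N:H]=d$, and $N$ is the image of the semilinear group $\Gamma\mathrm{L}_{n/d}(q^d)$ in $\GL_n(q)$. Finally, when $d$ is prime, $N$ is one of the maximal subgroups of $\GL_n(q)$ of field-extension type; I would quote this from Aschbacher's theorem (or from the Kleidman--Liebeck treatment of class $\mathcal{C}_3$) rather than reprove it.

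For part (2), I would first verify transitivity. $\GL(V)$ is transitive on $k$-dimensional subspaces --- extend a basis of $U$ and a basis of $W$ to bases of $V$ and map one to the other --- and $\SL(V)$ is transitive too, since $\GL(V)_U$ contains elements of every determinant (scale a basis vector lying outside $U$, which exists as $k<n$), so any $g\in\GL(V)$ with $gU=W$ can be post-multiplied by an element of $\GL(V)_U$ to land in $\SL(V)$. Consequently $G$, lying between $\SL(V)$ and $\GL(V)$, is transitive on $k$-subspaces, so $G_U$ and $G_W$ are conjugate. For maximality I would take as known that $\GL(V)_U$ and $\SL(V)_U$ are maximal (parabolic) subgroups of $\GL(V)$ and $\SL(V)$, which is standard $(B,N)$-pair theory. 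The determinant observation above also shows $\det(G_U)=\det(G)$, whence $G_U\,\SL(V)=G$ and $[G_U:G_U\cap\SL(V)]=[G:\SL(V)]$. Now if $G_U\le M\le G$, intersect with $\SL(V)$: since $\SL(V)_U=G_U\cap\SL(V)\le M\cap\SL(V)$ and $\SL(V)_U$ is maximal in $\SL(V)$, either $M\cap\SL(V)=\SL(V)$, in which case $M\supseteq\SL(V)\,G_U=G$, or $M\cap\SL(V)=\SL(V)_U$, in which case $[M:M\cap\SL(V)]\le[G:\SL(V)]=[G_U:G_U\cap\SL(V)]$ combined with $G_U\le M$ forces $M=G_U$. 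Either way $G_U$ is maximal in $G$.

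I expect the only genuine obstacle to be the maximality assertion in part (1) for $d$ prime: this relies on Aschbacher's classification of the subgroups of the classical groups, and I would not attempt to reprove it. The remaining steps are routine manipulations with canonical forms (via Skolem--Noether), field norms, and subgroup indices.
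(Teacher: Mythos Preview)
The paper does not prove this proposition at all: it is introduced with the sentence ``Proposition~\ref{prop:efssss} below contains standard information about certain subgroups of $\GL_n(q)$, and we shall not prove it here.'' So there is no approach to compare against; you are supplying a proof where the authors chose to omit one.

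Your argument is correct. In part~(1) the identification of a field-extension embedding with a subfield $K\cong\F_{q^d}$ of $\Mat_n(q)$, the use of Skolem--Noether for conjugacy, the correction into $\SL_n(q)$ via surjectivity of the norm on determinants, and the computation $N/H\cong\mathrm{Gal}(\F_{q^d}/\F_q)$ using the coordinatewise Frobenius are all standard and sound; citing Aschbacher (or Kleidman--Liebeck) for the maximality when $d$ is prime is exactly what one does in practice. In part~(2) your transitivity argument and the index-chasing transfer of maximality from $\SL(V)$ to $G$ are clean and correct. One cosmetic point: the statement as printed has a typo (``normalizer in $\GL_d(q)$'' should read $\GL_n(q)$), and you have silently proved the intended assertion. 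You might also note explicitly that ``embeddings'' here means the natural field-extension embeddings rather than arbitrary abstract injections, which is clearly the intended reading in context.
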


It will be convenient to have concise notation for these subgroups.

\begin{definition}
\begin{enumerate}
\item We refer to the maximal subgroups of Proposition~\ref{prop:efssss}~(1) as \emph{extension field subgroups} of degree $d$,
and we write $\efs(d)$ for the conjugacy class consisting of the intersections of all such subgroups with the group $G$.
\item We refer to the subgroups of Proposition \ref{prop:efssss} (2) as \emph{subspace stabilizers} of dimension $k$, and we write $\sss(k)$ for
the conjugacy class consisting of all such subgroups.
\end{enumerate}
\end{definition}

The following technical lemma will be useful.

\begin{lemma}\label{lem:subgroups}
\begin{enumerate} \item Suppose that $X\in \GL(V)$, and that $X$ stabilizes a $k$-dimensional
subspace of $V$. Then $X$ stabilizes a subspace whose dimension is $n-k$.
\item Let $X\in \GL(V)$, and let $p$ be a prime dividing $n$. If $X$ lies in no extension
field subgroup of degree $p$, then it stabilizes a subspace of $V$ whose dimension is coprime with $p$.
\end{enumerate}
\end{lemma}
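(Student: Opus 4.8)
For part (1), the plan is to use the duality between a vector space and its dual. If $X \in \GL(V)$ stabilizes a $k$-dimensional subspace $U$, then consider the contragredient action of $X$ on $V^*$. The annihilator $U^\circ = \{f \in V^* : f|_U = 0\}$ has dimension $n-k$ and is $X$-invariant (under the contragredient). Since $V$ and $V^*$ are isomorphic as $\GL(V)$-modules after composing with the inverse-transpose automorphism... actually, a cleaner route: fix any $X$-invariant complement is not available in general, so instead I would argue via rational canonical form. Decompose $V$ into indecomposable $X$-invariant summands $V = V_1 \oplus \cdots \oplus V_r$ (primary/cyclic decomposition over $\F_q$). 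Any $X$-invariant subspace need not respect this decomposition, but the key point is simpler: the dual space argument shows $X$ (equivalently its transpose with respect to any basis) stabilizes an $(n-k)$-dimensional subspace, and the transpose of $X$ is conjugate to $X$ in $\GL_n(q)$ (a standard fact: every matrix is similar to its transpose over any field). Hence $X$ itself stabilizes some $(n-k)$-dimensional subspace.

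For part (2), the plan is contrapositive together with part (1) and Proposition \ref{prop:efssss}(1). Suppose $X$ stabilizes no subspace of dimension coprime to $p$. By part (1), the set of dimensions of $X$-invariant subspaces is closed under $k \mapsto n-k$; I claim it is also closed under addition when the summands can be chosen "disjointly", but more to the point: if every $X$-invariant subspace has dimension divisible by $p$, I want to conclude $X$ lies in an extension field subgroup of degree $p$. Write $n = pm$. The idea is to produce an $\F_{q^p}$-vector space structure on $V$, compatible with $X$, i.e. an element $\sigma \in \GL(V)$ of order $p^p - 1$... no — rather, an embedding of $\F_{q^p}$ into $\End_{\F_q[X]}(V)$ acting on $V$ making it a free $\F_{q^p}[X]$-module of rank $m$, so that $X$ normalizes (indeed centralizes up to the field action) a copy of $\GL_m(q^p)$. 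Concretely: decompose $V$ as an $\F_q[X]$-module; the hypothesis that every invariant subspace has $p \mid \dim$ forces each indecomposable summand to have dimension divisible by $p$ (an indecomposable summand of dimension $d$ with $p \nmid d$ would itself be an invariant subspace of "wrong" dimension — wait, one must rule out that invariant subspaces cutting across summands could still all have dimension divisible by $p$ while individual summands do not; for cyclic indecomposables over a field, the invariant subspaces of $\F_q[x]/(f^a)$ are linearly ordered with dimensions $0, \deg f, 2\deg f, \dots$, so if $p \nmid \deg f$ we'd get an invariant subspace of dimension $\deg f$ not divisible by $p$ inside that summand, contradiction). So $p \mid \deg f_i$ for every elementary divisor; in particular every irreducible factor $f$ of the characteristic polynomial has $p \mid \deg f$, i.e. $p$ divides the degree of every eigenvalue of $X$ over $\F_q$. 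This is precisely the condition for $X$ to be conjugate into $\GL_{n/p}(q^p)$: one builds the $\F_{q^p}$-structure summand by summand, and the normalizer of this $\GL_{n/p}(q^p)$ is an extension field subgroup of degree $p$.

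The main obstacle I anticipate is the cross-summand issue in part (2): showing rigorously that if every $\F_q[X]$-invariant subspace of $V$ has dimension divisible by $p$, then every elementary divisor of $X$ has degree divisible by $p$. The clean way is to localize at each irreducible polynomial $f \mid \chi_X$: the $f$-primary component $V_f$ is $X$-invariant, hence $p \mid \dim V_f$; within $V_f \cong \bigoplus_j \F_q[x]/(f^{a_j})$, the socle (the $\ker f(X)$ inside $V_f$) is invariant of dimension (number of blocks)$\cdot \deg f$, and by taking invariant subspaces generated by single cyclic generators one isolates individual blocks, forcing $p \mid \deg f$. Once that structural fact is in hand, assembling the extension-field embedding is routine via the standard identification $\F_q[x]/(g(x)) \hookrightarrow \F_{q^{\deg g}}$ and Proposition \ref{prop:efssss}(1).
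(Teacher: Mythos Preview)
Your proposal is correct and follows essentially the same route as the paper: for part~(1) the dual/annihilator argument together with the fact that $X$ is similar to $X^t$ (which you make explicit, whereas the paper leaves it implicit), and for part~(2) the contrapositive showing that every irreducible factor of the characteristic polynomial has degree divisible by $p$, followed by building a preimage in $\GL_{n/p}(q^p)$ elementary divisor by elementary divisor. Your detour worrying about ``cross-summand'' invariant subspaces is unnecessary---as you yourself observe, a single cyclic summand $\F_q[x]/(f^a)$ already supplies an invariant subspace of dimension $\deg f$---but it does no harm.
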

\begin{proof}
\begin{enumerate}
\item Suppose $X$ stabilizes a space $U$ of dimension $k$. Then the
transpose $X^t$ acts on the dual space $V^*$, and stabilizes the annihilator of $U$, which has dimension $n-k$.
\item If $X$ stabilizes no subspace whose dimension is coprime with $p$, then every irreducible divisor of its
characteristic polynomial has degree divisible by $p$, and must therefore split into $p$ factors over $\F_{q^p}$.
Suppose that the elementary divisors of $X$ are $f_1^{a_1},\dots,f_t^{a_t}$. For each $i$, let $g_i$ be an irreducible
factor of $f_i$ over $\F_{q^p}$, and let $Y\in\GL_{n/p}(q^p)$ have elementary divisors $g_1^{a_1},\dots,g_t^{a_t}$.
Then it is not hard to see that any embedding of $\GL_{n/p}(q^p)$ into $\GL_n(q)$ must map $Y$ to a conjugate of $X$.
\end{enumerate}
\end{proof}

We are now in a position to exhibit some normal coverings of $G$.

\begin{lemma}\label{lem:coverings1}
\begin{enumerate}
\item Let $p$ be a prime dividing $n$. Then there is a normal covering $\mathcal{C}_p$ for $G$ given by
\[
\mathcal{C}_p=\{\efs(p)\}\cup\{\sss(k)\mid 1\le k\le n/2,\ p\nmid k\}.
\]
The size of $\mathcal{C}_p$ is
\[
|\mathcal{C}_p| = \left\lfloor\left(1-\frac{1}{p}\right)\frac{n}{2}\right\rfloor+1+\epsilon,
\]
where
\[
\epsilon =\left\{\begin{array}{ll} 1 &  \textrm{if $p=2$ and $n/2$ is odd,} \\
0 & \textrm{otherwise.}\end{array}\right.
\]
This is minimized when $p$ is the smallest prime divisor of $n$.
\item Let $p_1$ and $p_2$ be distinct prime divisors of $n$. Then there is a normal covering $\mathcal{C}_{p_1,p_2}$ for $G$ given by
\[
\mathcal{C}_{p_1,p_2}=\{\efs(p_1),\efs(p_2)\}\cup\{\sss(k)\mid 1\le k< n/2,\ p_1,p_2\nmid k\}.
\]
The size of $\mathcal{C}_{p_1,p_2}$ is
\[
|\mathcal{C}_{p_1,p_2}|=\left(1-\frac{1}{p_1}\right)\left(1-\frac{1}{p_2}\right)\frac{n}{2}+2.
\]
This is minimized when $p_1$ and $p_2$ are the two smallest prime divisors of~$n$.
\end{enumerate}
\end{lemma}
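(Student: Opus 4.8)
The plan is, for each of the two proposed families, to verify the covering property and then to compute its cardinality; the minimization statements will fall straight out of the cardinality formulas. Properness and distinctness are free: by Proposition~\ref{prop:efssss} every member of $\efs(\cdot)$ and of $\sss(\cdot)$ is a maximal (hence proper) subgroup of $G$; an extension field subgroup is irreducible on $V$ whereas a subspace stabilizer is not; and subspace stabilizers of different dimensions lie in different classes. So the substance is the covering property plus a counting exercise.

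For $\mathcal{C}_p$: given $X\in G$, either $X$ lies in a member of $\efs(p)$, or by Lemma~\ref{lem:subgroups}~(2) it stabilizes a subspace $U$ of dimension coprime to $p$; since $p\mid n$ this forces $1\le\dim U\le n-1$. If $\dim U\le n/2$ then $\sss(\dim U)\in\mathcal{C}_p$ covers $X$; if $\dim U>n/2$ then Lemma~\ref{lem:subgroups}~(1) gives an invariant subspace of dimension $n-\dim U$, which is at least $1$, strictly below $n/2$, and still coprime to $p$ (as $p\mid n$), so again $X$ is covered. The cardinality is $|\mathcal{C}_p|=1+\#\{k:1\le k\le\lfloor n/2\rfloor,\ p\nmid k\}$, which I would evaluate by a short case split on the parity of $n$ and on whether $p=2$; the only delicate case is $p=2$ with $n\equiv 2\pmod 4$, where $k=n/2$ is an admissible odd dimension and contributes $\epsilon=1$. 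Since $(1-1/p)n/2$ increases with $p$, and a direct check shows the $\epsilon$-term cannot reverse a comparison, $|\mathcal{C}_p|$ is least when $p$ is the smallest prime divisor of $n$.

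The step that needs a new idea is the covering property of $\mathcal{C}_{p_1,p_2}$: from Lemma~\ref{lem:subgroups}~(2) one only gets invariant subspaces coprime to $p_1$ and (separately) to $p_2$, whereas one needs a single subspace of dimension coprime to $p_1p_2$. I would pass to the characteristic polynomial of $X$: if $X$ lies in no member of $\efs(p_i)$, then some irreducible factor of its characteristic polynomial has degree coprime to $p_i$ (dimensions of invariant subspaces are sums of multiples of these degrees). Pick a factor $f$ with $p_1\nmid\deg f$ and a factor $g$ with $p_2\nmid\deg g$. If also $p_2\nmid\deg f$, the cyclic subspace for $f$ has dimension $\deg f$ coprime to $p_1p_2$; otherwise $f\ne g$, and after swapping the roles of $p_1,p_2$ if necessary we may assume $p_1\mid\deg g$, and then $X$ stabilizes the direct sum of the cyclic subspaces for $f$ and $g$, of dimension $\deg f+\deg g$, coprime to both primes. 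So $X$ stabilizes a subspace $U$ with $\dim U$ coprime to $p_1p_2$; since $p_1,p_2\mid n$, $\dim U\notin\{0,n\}$, and $\dim U\ne n/2$ because at least one of $p_1,p_2$ is an odd prime dividing $n$, and such a prime also divides $n/2$. Hence $k:=\min(\dim U,n-\dim U)$ satisfies $1\le k<n/2$ and is coprime to $p_1p_2$ (using Lemma~\ref{lem:subgroups}~(1) when the minimum is $n-\dim U$), so $X$ is covered by $\sss(k)\in\mathcal{C}_{p_1,p_2}$. I expect this paragraph to be the main obstacle; the rest is bookkeeping.

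It then remains to count $|\mathcal{C}_{p_1,p_2}|=2+\#\{k:1\le k<n/2,\ p_1\nmid k,\ p_2\nmid k\}$. Inclusion--exclusion over $\{1,\dots,n-1\}$ yields $n-n/p_1-n/p_2+n/(p_1p_2)=n(1-1/p_1)(1-1/p_2)$ admissible values, all fractions being integers since $p_1,p_2,p_1p_2$ divide $n$. The involution $k\mapsto n-k$ permutes this set (it preserves non-divisibility by $p_1$ and by $p_2$) and is fixed-point-free on it, since its only possible fixed point $n/2$ is not admissible by the remark above; so exactly half the values lie below $n/2$, giving $\#\{k:1\le k<n/2,\ p_1,p_2\nmid k\}=(1-1/p_1)(1-1/p_2)\,n/2$ (in particular an integer) and hence the stated cardinality. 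Since $(1-1/p_1)(1-1/p_2)$ increases with each argument, $|\mathcal{C}_{p_1,p_2}|$ is least when $p_1$ and $p_2$ are the two smallest prime divisors of $n$.
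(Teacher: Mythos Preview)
Your proposal is correct and follows essentially the same approach as the paper: both deduce the covering property of $\mathcal{C}_p$ directly from Lemma~\ref{lem:subgroups}, and for $\mathcal{C}_{p_1,p_2}$ both pass to the irreducible factors of the characteristic polynomial and, when no single factor has degree coprime to $p_1p_2$, combine two factors to obtain an invariant subspace of dimension coprime to both primes. Your counting arguments (inclusion--exclusion together with the fixed-point-free involution $k\mapsto n-k$) are more explicit than the paper's, which simply declares the sizes to be easily seen.
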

\begin{proof}
The sizes of the sets $\mathcal{C}_p$ and $\mathcal{C}_{p_1,p_2}$ are easily seen to be as stated. That $\mathcal{C}_p$ is a covering follows
immediately from Lemma \ref{lem:subgroups}. So it remains only to prove that $C_{p_1,p_2}$ is a covering.

Let $X\in G$, let $f_X$ be the characteristic polynomial of $X$, and let $g_1,\dots,g_s$ be the irreducible factors
of $f_X$ over $\F_q$, with degrees $d_1,\dots,d_s$ respectively. Then clearly there exist $X$-invariant subspaces
$U_1,\dots,U_s$ such that $\dim U_i = d_i$ for all $i$, and such that $U_i\cap U_j=\{0\}$ whenever $i\neq j$. If any
$d_i$ is divisible by neither $p_1$ nor $p_2$, then $X$ is contained in a subspace stabilizer from one of the classes
in $\mathcal{C}_{p_1,p_2}$. So we assume that each $d_i$ is divisible by at least one of $p_1$ or $p_2$. Suppose that
$d_a$ is
divisible by $p_1$ but not $p_2$, and that $d_b$ is divisible by $p_2$ but not $p_1$. Then $U_a\oplus U_b$ is an
$X$-invariant subspace, and its dimension is coprime with $p_1$ and $p_2$; so again, $X$ is in a subspace stabilizer
from $\mathcal{C}_{p_1,p_2}$. But if no such $d_a$ and $d_b$ can be found, then either all of the $d_i$ are divisible
by $p_1$, or they are all divisible by $p_2$. In this case, $X$ lies in an extension field subgroup either of
degree $p_1$ or of degree $p_2$. \end{proof}

We note that the argument of the last paragraph of this proof does not extend to the case of three primes,
$p_1,p_2,p_3$. It is possible to find matrices whose invariant subspaces all have dimensions divisible
by one of those primes, but which lie in no extension field subgroup. In the case that the primes are $2,3$
and $5$, for instance, there are $30$-dimensional matrices whose irreducible invariant spaces have
dimensions $2$, $3$ and $25$. (Another example is used in the proof of Proposition \ref{prop:10p} below.)
This is the explanation for the appearance of the two smallest prime divisors of $n$
in the first upper bound of Theorem \ref{thm:upperbounds}, which may at first seem a little curious.

The second upper bound of Theorem \ref{thm:upperbounds} is proved in a somewhat similar fashion.

\begin{lemma}\label{lem:coverings2}
Let $p_1, \dots, p_\nu$ be the distinct primes dividing $n$. Then there is a normal covering $\mathcal{D}$ of $G$ given by
\begin{equation*}
\begin{split}
\mathcal{D} &=\{\sss(k)\mid 1\le k\le n/3\} \\
&\quad \cup\ \{\sss(k)\mid n/3< k\le n/2,\ \hcf(k,n)=1\} \\
&\quad \cup\ \{\efs(p_i)\mid 1\le i\le \nu\}.
\end{split}
\end{equation*}
For $n>6$, the size of $\mathcal{D}$ is
\[
\left\lfloor\frac{n}{3}\right\rfloor + \phi(6,2,n) + \nu.
\]
\end{lemma}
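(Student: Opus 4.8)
The plan is to follow the proof of Lemma~\ref{lem:coverings1}~(2), working throughout with the characteristic polynomial $f_X$ of a general element $X\in G$; all the subgroups occurring in $\mathcal{D}$ are proper by Proposition~\ref{prop:efssss}. The first step is to record, using Lemma~\ref{lem:subgroups}~(1), that the classes $\sss(k)$ and $\sss(n-k)$ cover the same elements up to conjugacy, so that the subspace stabilisers appearing in $\mathcal{D}$ together cover exactly those $X$ which stabilise \emph{some} subspace whose dimension $k$ (with $1\le k\le n-1$) satisfies $k\le n/3$, or $k\ge 2n/3$, or $\hcf(k,n)=1$; call such a $k$ \emph{admissible}. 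Since $\{\efs(p_i):1\le i\le\nu\}\subseteq\mathcal{D}$, it then suffices to prove that an element stabilising no admissible subspace lies in $\efs(p)$ for some prime $p\mid n$.

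So let $X$ have no admissible invariant subspace and write $f_X=\prod_{i=1}^{s}g_i^{a_i}$ with the $g_i$ distinct, irreducible over $\F_q$, of degrees $d_i$. If $f_X$ is irreducible, i.e.\ $s=1$ and $a_1=1$, then $X$ stabilises no proper nonzero subspace, hence none of dimension coprime with a fixed prime divisor $p$ of $n$ (which exists since $n>6$), so Lemma~\ref{lem:subgroups}~(2) gives $X\in\efs(p)$. Otherwise every $d_i$ is less than $n$, hence is the dimension of a proper nonzero $X$-invariant subspace (as in Lemma~\ref{lem:coverings1}, $X$ stabilises a subspace of each dimension $\sum_i b_i d_i$ with $0\le b_i\le a_i$), and is therefore non-admissible: $n/3<d_i<2n/3$. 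Summing, $n=\sum_i a_i d_i>\tfrac{n}{3}\sum_i a_i$, so $\sum_i a_i\le 2$, which leaves only $s=1,\ a_1=2,\ d_1=n/2$ or $s=2,\ a_1=a_2=1,\ d_1+d_2=n$. In both of these the full list of dimensions of $X$-invariant subspaces can be written down, being $\{0,n/2,n\}$ and $\{0,d_1,d_2,n\}$ respectively; this uses the submodule lattice of a primary module of composition length $2$, resp.\ of a direct sum of two non-isomorphic simple modules, and is the step I would be most careful about, since the conclusion relies on knowing \emph{every} invariant dimension.

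In these two remaining cases I would then extract the prime. Non-admissibility gives $\hcf(k,n)>1$ for every nonzero proper invariant dimension $k$, so $\hcf(n/2,n)>1$ in the first case, and $\hcf(d_1,n)=\hcf(d_2,n)>1$ in the second (the two agree because $d_2=n-d_1$). Picking a prime $p$ dividing this common value, every $X$-invariant subspace has dimension divisible by $p$, so $X$ stabilises none of dimension coprime with $p$, and Lemma~\ref{lem:subgroups}~(2) gives $X\in\efs(p)$ with $p\mid n$. Together with the irreducible case this shows $\mathcal{D}$ is a normal covering. (The low-dimensional degeneracies one might worry about here --- such as $d_1=n/2=1$ when $n=2$, where the prime would vanish --- do not actually occur, because dimension $1$ is then admissible; but ruling them out is part of why the hypothesis $n>6$ is convenient, the other reason being that this is the range of definition of $\phi(6,2,n)$.)

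Finally, I would count $|\mathcal{D}|$ for $n>6$. The classes $\sss(k)$ with $1\le k\le n/3$ are $\lfloor n/3\rfloor$ in number and pairwise distinct; the classes $\sss(k)$ with $n/3<k\le n/2$ and $\hcf(k,n)=1$ are indexed by distinct integers, all exceeding $n/3$, so are pairwise distinct and distinct from the first family; and the $\nu$ classes $\efs(p_i)$ are pairwise distinct and distinct from every subspace-stabiliser class. It remains only to match $\#\{k:n/3<k\le n/2,\ \hcf(k,n)=1\}$ with $\phi(6,2,n)=\#\{x:\hcf(x,n)=1,\ n/3<x<n/2\}$: the sole difference is the endpoint $k=n/2$, counted by the former but not the latter, which however has $\hcf(n/2,n)=n/2>1$ whenever it is an integer (as $n>6$) and so is never present. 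Adding the three contributions gives $|\mathcal{D}|=\lfloor n/3\rfloor+\phi(6,2,n)+\nu$.
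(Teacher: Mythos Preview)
Your proof is correct and follows essentially the same line as the paper's own argument: both reduce to the observation that if the smallest invariant subspace of $X$ has dimension exceeding $n/3$, then the characteristic polynomial forces the set of proper invariant dimensions to be $\{k,n-k\}$ (your two cases $s=1,a_1=2$ and $s=2,a_1=a_2=1$), after which a common prime divisor of $k$ and $n$ puts $X$ into an extension field subgroup via Lemma~\ref{lem:subgroups}~(2). Your treatment simply spells out details that the paper leaves to the reader, including the submodule lattice in the two residual cases and the endpoint check in the count of $|\mathcal{D}|$.
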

\begin{proof}
Let $X\in G$. Suppose that $X$ is reducible, and that its smallest non-trivial invariant subspace has dimension
$k$. If $k> n/3$ then it is not hard to see (for instance, by considering the irreducible factors of the
characteristic polynomial) that $X$ stabilizes at most one other proper non-trivial subspace, of dimension
$n-k$. It follows that if $p$ is a prime dividing both $n$ and $k$, then $X$ is contained in an
element of $\efs(p)$. It is now a straightforward matter to show that $\mathcal{D}$ is a normal covering, and
we omit further details. The size of $\mathcal{D}$ follows immediately from its definition.
\end{proof}

\section{Lower bounds for $\kappa(G)$}\label{sec:lowerbounds}

Recall that $\GL_n(q)$ contains elements of order $q^n-1$, often known as Singer elements.
Such elements stabilize no non-trivial proper subspace of $V$. The determinant of a
Singer element generates the multiplicative group of $\F_q$.

In order to handle all groups $G$ in the range $\SL_n(q)\le G\le \GL_n(q)$ together, we define a parameter
$\alpha\in\N$ as follows.
\[
\alpha = \left\{\begin{array}{ll}
0 & \textrm{if $G=\SL_n(q)$},\\
-|\GL_n(q):G| & \textrm{otherwise}.
\end{array}\right.
\]
Let $\zeta$ be a generator of the multiplicative group of $\F_q$. Then we have
\[
\frac{G}{\SL_d(q)}\cong \langle \zeta^\alpha \rangle.
\]

\begin{definition}
\begin{enumerate}
\item For $d=1,\dots,n$, let $\Gamma_d$ be a Singer element with determinant $\zeta$ in $\GL_d(q)$.
\item For $k< n/2$, define \[\Sigma_k=\diag({\Gamma_k}^{\alpha-1},\Gamma_{n-k}).\]
\item For $j< (n-2)/4$, define \[T_j= \diag({\Gamma_j}^{\alpha-2},\Gamma_{j+1},\Gamma_{n-2j-1}).\]
\end{enumerate}
\end{definition}

The reasons for defining $\alpha$ as above will be clear from the following remark.
\begin{remark}
\begin{enumerate}
\item Since $\det\Sigma_k = \det T_j = \zeta^\alpha$, we have $\Sigma_k, T_j\in G$.
\item It is clear from the definition of $\alpha$ that  $(1-q) < \alpha \le 0$, and hence that
$|\alpha -2|<q+1$. It follows easily that the actions of the matrices ${\Gamma_k}^{\alpha-1}$ and
${\Gamma_j}^{\alpha-2}$ are irreducible for all $k$ and $j$. Therefore the module $\F_q\langle\Sigma_k\rangle$
decomposes into precisely two irreducible summands,
and $\F_q\langle T_j\rangle$ decomposes into precisely three irreducible summands.
\end{enumerate}
\end{remark}

\begin{lemma}\label{lem:overgroups} Suppose that $n>4$. Let $k<n/2$, and if $q=2$ then suppose that $n-k\neq 6$.
Let $j<(n-2)/4$, and if $q=2$ then suppose that $n-2j-1\neq 6$.
\begin{enumerate}
\item If $M$ is a maximal subgroup of $G$ containing $\Gamma_n$ then $M$ is an extension field subgroup of prime degree.
\item If $M$ is a maximal subgroup of $G$ containing $\Sigma_k$ then $M$ is either an extension field subgroup whose degree
is a prime divisor of $\gcd(k,n)$, or else the stabilizer of a subspace of dimension $k$ or $n-k$.
\item Let $n$ have at least $3$ distinct prime divisors. If $M$ is a maximal subgroup of $G$ containing
$T_j$, then $M$ is the stabilizer of a subspace whose dimension is one
of $j$, $j+1$, $2j+1$, $n-2j-1$, $n-j-1$ or $n-j$.
\end{enumerate}
\end{lemma}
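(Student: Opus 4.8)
The plan is to apply the classification of \cite{GPPS} to $M$ directly, using the decomposition of $V$ recorded in the Remark above. Write $X$ for whichever of $\Gamma_n$, $\Sigma_k$, $T_j$ is under consideration. By that Remark, the $\F_q\langle X\rangle$-module $V$ splits as a direct sum of pairwise non-isomorphic irreducible summands: one of dimension $n$ if $X=\Gamma_n$; two, of dimensions $k$ and $n-k$, if $X=\Sigma_k$; and three, of dimensions $j$, $j+1$, $n-2j-1$, if $X=T_j$ (these three are pairwise distinct, as one checks from $j<(n-2)/4$). Since the summands are pairwise non-isomorphic, the $X$-invariant subspaces of $V$ are exactly the partial sums of the summands, so the proper non-zero ones have precisely the dimensions appearing in the statement. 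The largest summand has some dimension $e>n/2$ (namely $n$, $n-k$ or $n-2j-1$), and $X$ acts on it as a Singer element $\Gamma_e$ of order $q^e-1$; consequently $|M|$ is divisible by a primitive prime divisor $\ell$ of $q^e-1$. Such an $\ell$ exists because $e\ge 3$, the only exception being $(e,q)=(6,2)$, which is excluded by hypothesis when $X\in\{\Sigma_k,T_j\}$, and which for $X=\Gamma_n$ means $(n,q)=(6,2)$ --- a single pair to be treated separately at the end.

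Next I would invoke \cite{GPPS}, which classifies the subgroups of $\GL_n(q)$ whose order is divisible by a primitive prime divisor of $q^e-1$ for some $e>n/2$. Here $M$ is such a subgroup, and $M$ does not contain $\SL_n(q)$, for otherwise $M\supseteq\langle\SL_n(q),X\rangle=G$, since $\det X$ generates $G/\SL_n(q)$ (for $X=\Gamma_n$ this uses that $\Gamma_n\in G$ already forces $G=\GL_n(q)$). Hence $M$ lies in a member of one of the Aschbacher classes $\mathcal{C}_1,\dots,\mathcal{C}_8$, or $M/(M\cap Z(\GL_n(q)))$ is one of the almost simple groups in the explicit tables of \cite{GPPS}. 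The classes $\mathcal{C}_1$, $\mathcal{C}_3$ and $\mathcal{C}_8$ I would handle at once. If $M$ lies in the stabiliser of an $m$-dimensional subspace, then $X$ fixes an $m$-subspace, so $m$ is one of the dimensions found above; the intersection of that stabiliser with $G$ is proper in $G$ (as $\SL_n(q)$ acts irreducibly) and contains $M$, so maximality gives $M\in\sss(m)$. If $M$ lies in a field-extension subgroup, it lies in one of prime degree $p$ with $p\mid n$, hence in a member of $\efs(p)$, again proper in $G$, so $M\in\efs(p)$; and analysing the $\F_q\langle X\rangle$-module as in the proof of Lemma~\ref{lem:subgroups}(2) --- noting that no Galois-twisting can occur, the summands being pairwise non-isomorphic over $\F_q$ --- shows that $X\in\efs(p)$ precisely when $p$ divides every summand dimension: this is no restriction for $\Gamma_n$, is the condition $p\mid\gcd(k,n)$ for $\Sigma_k$, and is impossible for $T_j$ since no prime divides both $j$ and $j+1$, so the field-extension case does not arise in part (3). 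If $M$ lies in the normaliser of a classical group preserving a non-degenerate form on $V$, then that form restricts degenerately to the $e$-dimensional summand, because a Singer element $\Gamma_e$ preserves no non-degenerate symplectic, unitary or quadratic form on an $e$-space when $e\ge 3$ (the relevant groups having no element of order as large as $q^e-1$); so that summand, being $X$-irreducible, is totally isotropic, forcing $e\le n/2$ --- a contradiction.

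What remains is to discard $\mathcal{C}_2,\mathcal{C}_4,\mathcal{C}_5,\mathcal{C}_6,\mathcal{C}_7$ and the almost simple possibilities. For the imprimitive, tensor, tensor-induced and subfield classes the primitive prime divisor $\ell$ finishes the job at once: the structure of such a subgroup confines any element of order divisible by $\ell$ to a tensor factor whose dimension, or to a subfield whose index, is too small to carry a primitive prime divisor of $q^e-1$ with $e>n/2$. For $\mathcal{C}_6$ and the almost simple examples I would compare element orders: each candidate has exponent bounded in terms of $n$ and $q$, whereas $M$ contains $X$, whose order is divisible by $q^e-1\ge q^{\lceil(n+1)/2\rceil}-1$ (and equal to $q^n-1$ when $X=\Gamma_n$); the finitely many small $n$ not dispatched by a crude inequality would be checked against the tables of \cite{GPPS}. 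This would complete everything except $(n,q)=(6,2)$ in part (1), where a direct inspection of the maximal subgroups of $\GL_6(2)=\SL_6(2)$ shows that among those of order divisible by $63=|\Gamma_6|$, only the extension field subgroups of degrees $2$ and $3$ actually contain an element of order $63$.

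The skeleton here is short; the real work --- and the step I expect to be the main obstacle --- is the elimination of $\mathcal{C}_2$ through $\mathcal{C}_7$ and the almost simple class, which is routine in spirit but must be done carefully for small $n$, keeping track of the precise exceptional lists of \cite{GPPS}. A secondary point requiring care is pinning down exactly when $X$ lies in $\efs(p)$, which is cleanest via restriction of scalars of $\F_q[x]$-modules as in Lemma~\ref{lem:subgroups}(2); and the $\mathcal{C}_8$ step rests on the standard but not wholly trivial fact that a Singer element preserves no non-degenerate classical form, i.e., on a comparison of maximal element orders.
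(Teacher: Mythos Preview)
Your approach is essentially the paper's: use a primitive prime divisor of $q^e-1$ with $e>n/2$ to invoke the classification of \cite{GPPS}, then eliminate the Aschbacher classes one by one. The paper's proof differs only in presentation: rather than running the Aschbacher case analysis, it cites Kantor \cite{K} for part~(1) and Theorem~4.1 of \cite{BEGHM} for part~(2), and sketches part~(3) directly (using that $\nu\ge 3$ disposes of the low-dimensional sporadics in Tables~1--7 of \cite{GPPS}, and order comparison handles the rest). One point worth flagging: the paper singles out $\GL_{11}(2)$ as a genuine exceptional case for $\Sigma_1$ not covered by \cite{BEGHM}, where several almost simple subgroups (with socle $\mathrm{M}_{23}$, $\mathrm{M}_{24}$, $\PSU_5(2)$, $\SL_2(11)$, $\SL_2(23)$) have order divisible by the relevant primitive prime divisor $11$ and must be eliminated by checking that none has an element of order $2^{10}-1=3\cdot11\cdot31$; your ``finitely many small $n$ to be checked against the tables'' covers this, but you should be aware that this particular case is not dispatched by a crude order inequality and really does require the table lookup.
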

\begin{proof} For all but one of the groups $\Gamma_n$ and $\Sigma_k$ under consideration, this is given in
Theorem 4.1 of \cite{BEGHM}; though the result is stated there only for the groups $\GL_n(q)$ and $\SL_n(q)$,
the proof applies equally to intermediate subgroups. The matrices $\Gamma_n$ and $\Sigma_k$ are
there referred to as $\mathrm{GL0}$ and $\mathrm{GLk}$ respectively.

For $\Gamma_n$, the result is essentially that of Kantor \cite{K}. For $\Sigma_k$,
the proof in \cite{BEGHM} relies on the existence of primitive prime divisors of $q^{n-k}-1$, which is given by
Zsigmondy's Theorem \cite{Z} for all pairs $(q,n-k)$ except $(2,4)$ and $(2,6)$; the second of these exceptions
accounts for the excluded case in the statement of the present lemma. The argument uses the classification in
\cite{GPPS}, of subgroups of $\GL_n(q)$ whose order is divisible by a prime divisor of $q^e-1$, where $e>n/2$.

The exceptional case which is not covered in \cite{BEGHM} is the group $\GL_{11}(2)$. In this case we require a
reference directly to the lists of \cite{GPPS}. We find that there are several irreducible subgroups whose order
is divisible by the primitive prime divisor $11$ of $2^{10}-1$; we must show that none of these contains $\Sigma_1$.
All of these subgroups are almost simple, and have a socle which is isomorphic either to one of the Mathieu groups
$\mathrm{M}_{23}$ or $\mathrm{M}_{24}$, or to the unitary group $\PSU_5(2)$, or to a linear group $\SL_2(11)$ or
$\SL_2(23)$. (These subgroups may be found in Table 5 (lines 12 and 14) and Table 8 (lines 2, 7 and 9) of \cite{GPPS}.)
Information about these groups may be found in \cite{ATLAS}. Neither any of the groups themselves, nor any of their
outer automorphism groups, have order divisible by $31$. Therefore an almost simple group of one of these types can
contain no element of order $2^{10}-1=3\cdot 11\cdot 31$, which is the order of the element $\Sigma_1$.

For the groups $T_j$ we refer once again to the classification of \cite{GPPS}, this time for
matrix groups whose order is divisible by a primitive prime divisor of $q^{n-2j-1}-1$. It is not hard to see that
$T_j$ has no overgroups of classical type. The condition that
$n$ has $3$ prime divisors rules out the small dimensional sporadic examples contained in Tables 1--7.
Other examples are ruled out because their order is less than $q^{n-2j-1}-1$, which is the order of the
summand $\Gamma_{n-2j-1}$ of $T_j$.
\end{proof}

We define a set of classes which will help us to establish a first lower bound for $\kappa(G)$.

\begin{definition}
Define a set $\Phi$ of classes of $G$ by
\[
\Phi=\{ \left[\Sigma_p\right]\mid p|n,\ p\textrm{\ prime,\ } p<n/2\}\,\cup\,
\{\left[\Sigma_k\right] \mid k<n/2,\ \hcf(n,k)=1\},
\]
where $[g]$ denotes the conjugacy class of $g$.
\end{definition}

\begin{lemma}\label{lem:setsize1}
Let $n>2$, and let $\nu(n)$ be the number of prime factors of $n$. Then
\[
|\Phi|=\phi(n)/2 + \nu(n) -\epsilon,
\]
where
\[
\epsilon =\left\{\begin{array}{ll} 1 & \textrm{if $n=2p$ for some odd prime $p$,} \\
0 & \textrm{otherwise.} \end{array}\right.
\]
\end{lemma}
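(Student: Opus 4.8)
The plan is to count the two constituent families of classes in $\Phi$ separately and then argue that—apart from one small exceptional configuration—the only overlap between them arises when $n/2$ is an integer coprime to nothing, i.e.\ the two descriptions can share at most the ``boundary'' class. More precisely, the first family $\{[\Sigma_p]\mid p\mid n,\ p\ \text{prime},\ p<n/2\}$ has size equal to the number of prime divisors $p$ of $n$ with $p<n/2$; this is $\nu(n)$ unless $n=2p$ for an odd prime $p$ (in which case the prime $p$ itself fails $p<n/2$, since then $n/2=p$), or $n$ is one of a few tiny values like $n=4$ or $n=6$ that are excluded by $n>2$ only partially—these need a quick separate check. The second family $\{[\Sigma_k]\mid k<n/2,\ \hcf(n,k)=1\}$ is indexed by integers $k$ with $1\le k<n/2$ and $\gcd(k,n)=1$; since $\gcd(k,n)=1\iff\gcd(n-k,n)=1$, the totatives of $n$ in $(0,n)$ pair up as $\{k,n-k\}$ with exactly one member of each pair in $(0,n/2)$, so this family has size $\phi(n)/2$. (When $n=2$ there are no such $k$, which is why the hypothesis $n>2$ is needed.)

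First I would establish that the two families are disjoint except in the exceptional case. A class $[\Sigma_k]$ from the second family has $k$ coprime to $n$, whereas a class $[\Sigma_p]$ from the first family has $p\mid n$; so if $\Sigma_k$ and $\Sigma_p$ were conjugate we would need $\F_q\langle\Sigma_k\rangle\cong\F_q\langle\Sigma_p\rangle$ as modules, forcing (by the Remark following the definition, which says each $\Sigma_k$-module splits into exactly two irreducibles of dimensions $k$ and $n-k$) the unordered pair $\{k,n-k\}$ to equal $\{p,n-p\}$. Hence $k=p$ or $k=n-p$; either way $\gcd(k,n)$ is divisible by $p>1$, contradicting $\gcd(k,n)=1$. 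So the two families are genuinely disjoint, and $|\Phi|$ is the sum of the two sizes: $\phi(n)/2+\nu(n)$ in general, dropping to $\phi(n)/2+\nu(n)-1$ precisely when $n=2p$ with $p$ an odd prime. That gives exactly the claimed formula with the stated $\epsilon$.

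The main obstacle is bookkeeping at the edges: I must make sure that (a) no prime $p\mid n$ with $p<n/2$ is accidentally also coprime to $n$ (impossible, so this is clean), (b) the first family has no internal repetitions—two distinct primes $p\neq p'$ give non-conjugate $\Sigma_p,\Sigma_{p'}$ because their module dimensions $\{p,n-p\}$ and $\{p',n-p'\}$ differ—and (c) the condition $p<n/2$ is correctly accounted for: the only prime divisor of $n$ that can fail it is $p=n/2$ itself, which happens exactly when $n=2p$ for a prime $p$, and we must have $p$ odd for $n=2p$ to be distinct from $n=4$; the case $n=4$ (where the only prime divisor $2$ satisfies $2<2$? no) should be checked by hand and is absorbed since $\nu(4)=1$ and $\phi(4)/2=1$ and indeed $\Sigma_1$ with $\gcd(1,4)=1$ gives the single second-family class while no prime $p<2$ divides $4$, matching $|\Phi|=1+1-1$? — here one sees $n=4=2\cdot2$ is \emph{not} of the form $2p$ with $p$ an \emph{odd} prime, so $\epsilon=0$ and the formula predicts $|\Phi|=2$; I would verify directly that $\Sigma_1$ and $\Sigma_2$ are both present and distinct. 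Carrying out these small verifications carefully is the only real work; the structural disjointness argument via module decomposition is short.
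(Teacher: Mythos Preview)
Your approach---count the two constituent families, show they are disjoint via the irreducible dimensions $\{k,n-k\}$, and account for primes excluded by the condition $p<n/2$---is exactly what the paper's one-line ``immediate from the definition'' intends, and your disjointness and $\phi(n)/2$ counts are correct.

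There is, however, a genuine gap in your edge-case analysis. You assert that the only way a prime divisor $p$ of $n$ can fail $p<n/2$ is $p=n/2$, i.e.\ $n=2p$. That is not quite right: since $p\mid n$ forces $n\in\{p,2p\}$ whenever $p\ge n/2$, the case $n=p$ (that is, $n$ itself prime) is also excluded, and you never address it. Your discussion of $n=4$ then runs into trouble for the same underlying reason. You correctly compute that the first family is empty (since $2<2$ is false) and the second family is $\{[\Sigma_1]\}$, so $|\Phi|=1$; but the formula with $\epsilon=0$ predicts $|\Phi|=2$. Your proposed remedy---``verify directly that $\Sigma_1$ and $\Sigma_2$ are both present''---cannot succeed, because $\Sigma_2$ is not even defined when $n=4$ (its definition requires $2<n/2$).

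In fact your confusion is detecting a real slip: the lemma as stated is off by $1$ precisely when $n$ is prime or $n=4$, and the paper's proof glosses over this. The discrepancy is harmless for the paper's applications (Theorem~1.2(1) assumes $\nu\ge 2$, and in Proposition~4.1 the cases $n$ prime and $n=4$ give $\gamma(G)=\kappa(G)=2$, which holds anyway since $\gamma\ge 2$ always), but as a proof of the lemma exactly as written your argument cannot be completed at those values---and neither can anyone else's.
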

\begin{proof} This is immediate from the definition of $\Phi$.\end{proof}

Lemma \ref{lem:setsize1}, together with the following two lemmas, will imply the first part of
Theorem \ref{thm:lowerbounds}.

\begin{lemma}\label{lem:nocommonovergroups1}
$\Phi$ is an independent set of classes.
\end{lemma}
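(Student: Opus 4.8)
The plan is to show that no proper subgroup of $G$ contains elements from two distinct classes in $\Phi$. Since every proper subgroup lies in a maximal subgroup, it suffices to show that no maximal subgroup $M<G$ contains a conjugate of $\Sigma_a$ and a conjugate of $\Sigma_b$ for $a\neq b$, where $a,b$ range over the index set $\{\,k<n/2 : \hcf(n,k)=1\,\}\cup\{\,p : p\mid n,\ p\ \textrm{prime},\ p<n/2\,\}$. (When $n=2p$ the class $[\Sigma_p]$ is absent, which is why $\epsilon$ appears in Lemma \ref{lem:setsize1}; this case needs no special attention here since we are only claiming independence.) First I would invoke Lemma \ref{lem:overgroups}(2): any maximal subgroup of $G$ containing $\Sigma_k$ is either an extension field subgroup of prime degree dividing $\hcf(k,n)$, or a subspace stabilizer of dimension $k$ or $n-k$. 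The exceptional case $q=2$, $n-k=6$ must be checked separately, but here $k$ is coprime to $n$ or is a prime $p<n/2$, so $k\in\{n-6\}$ forces constraints that can be ruled out directly (indeed $n-k=6$ with $k$ coprime to $n$ gives $\hcf(n,6)\mid\hcf(n,n-k)=\hcf(n,6)$ and $n=k+6$, a handful of small possibilities; and if $k=p<n/2$ is prime with $n-p=6$ then $p\mid n$ forces $p\mid 6$, again finitely many cases to dispatch).

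With Lemma \ref{lem:overgroups}(2) in hand, I would argue by cases on the two possible maximal overgroup types. If $M$ is a subspace stabilizer $\sss(m)$, then $M$ contains $\Sigma_k$ only if $m\in\{k,n-k\}$, i.e.\ the unordered pair $\{m,n-m\}$ equals $\{k,n-k\}$; since $0<k<n/2$ this pins $k$ down uniquely, so $M$ cannot also contain $\Sigma_b$ for any $b\neq k$. If instead $M$ is an extension field subgroup $\efs(p)$ for a prime $p\mid n$, then $M$ contains $\Sigma_k$ only when $p\mid\hcf(k,n)$, i.e.\ $p\mid k$. Now if $k$ is coprime to $n$ then $p\mid k$ is impossible, so $\Sigma_k\notin M$ for every $\efs(p)$; hence the classes indexed by $k$ coprime to $n$ can only share an overgroup of subspace-stabilizer type, already handled. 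The only remaining possibility is that $M=\efs(p)$ contains both $\Sigma_{p}$ and $\Sigma_{p'}$ for two distinct prime divisors $p,p'$ of $n$ with $p,p'<n/2$; but $\Sigma_{p'}\in\efs(p)$ requires $p\mid p'$, which is absurd for distinct primes. Thus no maximal subgroup contains representatives of two distinct classes of $\Phi$, which is exactly the assertion that $\Phi$ is independent.

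The main obstacle I anticipate is the bookkeeping around the hypotheses of Lemma \ref{lem:overgroups}(2) — namely ensuring $n>4$, $k<n/2$, and the $q=2$, $n-k=6$ exclusion — uniformly across all the indices appearing in $\Phi$. For $n\le 4$ the set $\Phi$ is essentially trivial (few or no coprime residues below $n/2$), so those cases are vacuous or immediate, and the $q=2$ exception reduces, as noted, to finitely many explicit $(n,k)$ which can be inspected by hand (for instance using that $\Sigma_1$ has order lcm of $q-1$ and $q^{n-1}-1$ and comparing with the orders of candidate overgroups). The conceptual content is entirely carried by Lemma \ref{lem:overgroups}(2); once its hypotheses are verified, the independence of $\Phi$ follows from the elementary observation that the overgroup type of $\Sigma_k$ determines $k$.
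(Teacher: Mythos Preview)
Your proposal is correct and follows essentially the same line as the paper: invoke Lemma~\ref{lem:overgroups}(2), then observe that a subspace stabilizer $\sss(m)$ determines $k$ via $\{m,n-m\}=\{k,n-k\}$, while an extension field class $\efs(p)$ can only cover $\Sigma_k$ when $p\mid k$, which among the indices in $\Phi$ happens only for $k=p$. The paper's write-up is terser but the content is the same.

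One small refinement worth noting for the excluded case $q=2$, $n-k=6$ (which, as you correctly identify, forces $n\in\{7,8,9,11\}$): rather than attempting to determine the maximal overgroups of $\Sigma_{n-6}$ directly by order considerations, the paper turns the problem around. Since Lemma~\ref{lem:overgroups}(2) \emph{does} apply to every other $\Sigma_k$ in $\Phi$, one knows exactly which classes of subgroups cover those; it then suffices to check that none of these specific subgroups (a short explicit list of $\sss(k)$, $\sss(n-k)$, $\efs(p)$) contains $\Sigma_{n-6}$. This avoids any need to classify overgroups in the Zsigmondy-exceptional situation, and is quicker than the order-comparison route you sketch.
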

\begin{proof}
Suppose that $q\neq 2$, or that $\left[\Sigma_{n-6}\right]\notin \Phi$. Then Lemma
\ref{lem:overgroups} provides full information about the maximal subgroups
of $G$ which contain elements of $\Phi$, and it is easy to check that the result holds in this case.

Next suppose that $q=2$ and $\left[\Sigma_{n-6}\right]\in \Phi$. (This implies that $n\in \{7,8,9,11\}$.)
Lemma \ref{lem:overgroups} gives full information about the
maximal subgroups of $G$ covering elements of the classes in $\Phi$ other than $\left[\Sigma_{n-6}\right]$.
No class of subgroups contains elements of more than one such class, and it is
easy to check that none covers the element $\Sigma_{n-6}$ itself.\end{proof}

\begin{lemma}\label{lem:lowerboundexception}
Let $n=2p$ where $p>2$ is a prime. Then $\kappa(G)\ge|\Phi|+1$.
\end{lemma}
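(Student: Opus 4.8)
The plan is to exhibit an independent set of classes of size $|\Phi|+1$ by adjoining to $\Phi$ the one class that is ``missing'' from it: the value $k=n/2=p$ is excluded from the family $\Sigma_k$, but the defining formula still makes sense there. So set $\Sigma_p=\diag(\Gamma_p^{\alpha-1},\Gamma_p)\in\GL_{2p}(q)$; since $\det\Sigma_p=\zeta^{\alpha-1}\zeta=\zeta^\alpha$ we have $\Sigma_p\in G$. The two facts I will use about this element are:
\textbf{(a)} each of its two $p$-dimensional blocks acts irreducibly on $\F_q^p$ — for $\Gamma_p$ because it is a Singer element, and for $\Gamma_p^{\alpha-1}$ by the remark following the definition of $\Sigma_k$, which holds for every $k$, including $k=p$ — so every proper nonzero $\Sigma_p$-invariant subspace of $V$ has dimension exactly $p$; and
\textbf{(b)} $\Sigma_p$ lies in no extension field subgroup of $G$ of degree $2$.
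Fact (a) already gives $[\Sigma_p]\notin\Phi$, because every $\Sigma_k$ occurring in $\Phi$ has an invariant subspace of dimension less than $p$ (namely of dimension $k\le p-2$ for the classes $[\Sigma_k]$ with $k$ odd, and of dimension $2$ for $[\Sigma_2]$); hence $\Phi\cup\{[\Sigma_p]\}$ has size $|\Phi|+1$.

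Granting (a) and (b), independence is quick. By Lemma~\ref{lem:nocommonovergroups1} the set $\Phi$ is independent, so it suffices to show that no maximal subgroup of $G$ contains conjugates of both $\Sigma_p$ and an element $\Sigma$ with $[\Sigma]\in\Phi$. Such a $\Sigma$ is $\Sigma_2$ or some $\Sigma_k$ with $k$ odd and $1\le k\le p-2$ (so that $\hcf(k,n)=1$). As $n=2p>4$, and — when $q=2$ — neither $n-2$ nor $n-k$ equals $6$ (indeed $n-2=6$ would force $p=4$, and $n-k$ is odd), Lemma~\ref{lem:overgroups}(2) applies and shows that any maximal overgroup of $\Sigma$ is an extension field subgroup of degree $2$ (possible only for $\Sigma=\Sigma_2$), or a subspace stabiliser whose dimension lies in
\[
\{1,3,\dots,p-2\}\ \cup\ \{2,\,2p-2\}\ \cup\ \{p+2,p+4,\dots,2p-1\}.
\]
Since $p\ge3$, none of these dimensions is $0$, $p$ or $2p$, so by (a) no such subspace stabiliser contains a conjugate of $\Sigma_p$; and by (b) no extension field subgroup of degree $2$ does either. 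Hence $\Phi\cup\{[\Sigma_p]\}$ is an independent set of classes and $\kappa(G)\ge|\Phi|+1$.

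It remains to prove (b), which is the only substantial point. If $\Sigma_p$ lay in an extension field subgroup of degree $2$, then $\Sigma_p$ would normalise some copy of $\GL_p(q^2)$ inside $\GL_{2p}(q)$, and so would be either $\F_{q^2}$-linear or $\F_{q^2}$-semilinear with respect to a suitable identification $V\cong\F_{q^2}^p$; in either case $\Sigma_p^2$ would be $\F_{q^2}$-linear, making $V$ into an $\F_{q^2}\langle\Sigma_p^2\rangle$-module and forcing $\F_{q^2}$ to embed in $D:=\End_{\F_q\langle\Sigma_p^2\rangle}(V)$. Now $\Sigma_p^2=\diag(\Gamma_p^{2(\alpha-1)},\Gamma_p^2)$; both $p$-dimensional blocks are again irreducible over $\F_q$ (the corresponding elements of $\F_{q^p}^*$ have order too large to lie in a proper subfield), and, crucially, they are non-conjugate in $\GL_p(q)$. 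Writing $\omega$ for a generator of $\F_{q^p}^*$, this non-conjugacy says that $\omega^{2(\alpha-1)}$ and $\omega^2$ lie in distinct Galois orbits over $\F_q$, i.e. $2(\alpha-1)\not\equiv 2q^i\pmod{q^p-1}$ for $0\le i<p$; this follows from a short estimate, because the bound $|\alpha-1|<q$ from the earlier remark keeps $2(\alpha-1)$ away from the residues $2,2q,\dots,2q^{p-1}$ modulo $q^p-1$ (when $q=2$ one has $\alpha=0$ and the claim is immediate). Consequently $V\cong S_1\oplus S_2$ with $S_1,S_2$ non-isomorphic, each of $\F_q$-dimension $p$, whence $D\cong\F_{q^p}\times\F_{q^p}$. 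A field cannot embed in a product of two fields without embedding in one of the factors, so $\F_{q^2}\hookrightarrow\F_{q^p}$, forcing $2\mid p$ — a contradiction.

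The step I expect to be the main obstacle is (b): translating membership in a degree-$2$ extension field subgroup into an $\F_{q^2}$-module structure on $V$ compatible with $\Sigma_p^2$, and then excluding it through the endomorphism ring. The heart of that is the non-conjugacy of the two blocks of $\Sigma_p^2$ — equivalently the failure of $2(\alpha-1)\equiv 2q^i\pmod{q^p-1}$, which rests on $|\alpha|<q$. Everything else is routine bookkeeping with invariant-subspace dimensions together with Lemmas~\ref{lem:overgroups}(2) and~\ref{lem:nocommonovergroups1}.
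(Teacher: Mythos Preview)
Your proof is correct and follows essentially the same route as the paper: adjoin $[\Sigma_p]$ with $\Sigma_p=\diag(\Gamma_p^{\alpha-1},\Gamma_p)$ to $\Phi$, use Lemma~\ref{lem:overgroups}(2) to list the maximal overgroups of the elements already in $\Phi$, observe that $\Sigma_p$ meets none of the relevant subspace stabilizers (since its only proper invariant subspaces have dimension $p$), and then rule out $\efs(2)$ by passing to $\Sigma_p^2$ and exploiting that its two $p$-dimensional blocks are irreducible and non-conjugate. Your treatment of the last step via the endomorphism ring $\End_{\F_q\langle\Sigma_p^2\rangle}(V)\cong\F_{q^p}\times\F_{q^p}$ is somewhat more explicit than the paper's, which simply asserts non-conjugacy and irreducibility over $\F_{q^2}$ and concludes; both arguments encode the same obstruction, namely that an odd-degree irreducible $\F_q[x]$-factor cannot acquire an $\F_{q^2}$-structure.
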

\begin{proof}
The proof of Lemma \ref{lem:nocommonovergroups1} shows that in any normal covering of $G$, the
distinct classes in $\Phi$ are covered by distinct classes of subgroups.
We add an extra conjugacy class to $\Phi$, namely the class represented by
$\Sigma_p=\diag({\Gamma_p}^{\alpha-1},\Gamma_p)$, where
$\Gamma_p$ is a Singer element in $\GL_p(q)$. This
element stabilizes no subspace of dimension $k$ for any $k$ coprime with $n$; nor does it stabilize a
subspace of dimension $2$ or $n-2$. Therefore, by the second part of Lemma \ref{lem:overgroups},
if there is a covering of $G$ of size $|\Phi|$, then $\Sigma_p$ must lie in a subgroup in $\efs(2)$.

Note that since $2$ and $p$ are coprime, ${\Sigma_p}^2$ has two irreducible summands of dimension
$p$. It is not hard to show that these submatrices are not conjugate, and neither of them is reducible over $\F_{q^2}$;
it follows that ${\Sigma_p}^2$ is not contained in any embedding of $\GL_p(q^2)$ into $G$. Hence $\Sigma_p$
itself is not contained in an embedding of \mbox{$\GL_p(q^2)\cdot 2$}. \end{proof}

This completes the proof of the first part of Theorem \ref{thm:lowerbounds}.
\medskip

We define a second independent set of classes which yields the second lower bound of Theorem \ref{thm:lowerbounds}.
We shall require the following lemma.

\begin{lemma}\label{lem:Bertrand}
Let $p$ be a prime divisor of $n$. Suppose that $n$ has at least $3$ distinct prime divisors, and that $n$ is not
equal to $6q$ or $10q$ for any prime $q$. Then there exists an integer $w_p$ such that $(n-2)/4 \le w_p < n/2$, and
such that $w_p$ is divisible by $p$, and by no other prime divisor of $n$. If $p\neq 3$
then $w_p$ may be chosen so that it is not divisible by $3$.
\end{lemma}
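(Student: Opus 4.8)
The plan is to produce the integer $w_p$ as a multiple of $p$ lying in the window $[(n-2)/4,\, n/2)$, with no other prime divisor of $n$, by a counting/pigeonhole argument. Write $n = p^a m$, where $p \nmid m$, and let $Q$ be the product of the distinct prime divisors of $n$ other than $p$ (so $Q \ge 1$, and $Q = 1$ only if $n$ is a prime power, which is excluded since $\nu(n) \ge 3$). The integers divisible by $p$ in the half-open interval $I = [(n-2)/4,\, n/2)$ are $p\lceil (n-2)/(4p)\rceil,\, p(\lceil (n-2)/(4p)\rceil + 1),\, \dots$; their number is at least $\lfloor (n/2 - (n-2)/4)/p \rfloor = \lfloor (n+2)/(4p) \rfloor$, which grows with $n/p$. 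Among these multiples $w = p\ell$ of $p$, I want one for which $\ell$ is coprime to $Q$ (so that $w$ picks up no prime divisor of $n$ beyond $p$), and — when $p \neq 3$ — also coprime to $3$ (note $3 \nmid Q$ is not assumed, so when $3 \mid n$ and $p \neq 3$ the condition "$\ell$ coprime to $Q$" already forces $3 \nmid \ell$; the extra clause is only needed when $3 \nmid n$). So it suffices to find $\ell$ in an interval of length $\ge (n+2)/(4p) - 1$ that is coprime to $Q' := \mathrm{lcm}(Q, 3)$.

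The key step is therefore a sieve estimate: in any interval of $N$ consecutive integers, the number coprime to a squarefree modulus $R$ with $r$ prime factors is at least $N \prod_{\ell \mid R}(1 - 1/\ell) - 2^r$ (inclusion–exclusion, the error term bounded by the number of subsets). Here $R = Q'$ has at most $\nu(n)$ prime factors (at most $\nu(n) - 1$ from $Q$ plus possibly the prime $3$, but if $3 \mid n$ this is absorbed), so $2^r \le 2^{\nu(n)}$. Meanwhile $\prod_{\ell \mid Q'}(1 - 1/\ell) \ge \prod_{i \ge 2} (1 - 1/p_i)$ taken over enough small primes; crucially, $n/p = p^{a-1} m \ge m \ge Q \ge \prod_{i=2}^{\nu} p_i$ (product of the other prime divisors), so the interval length $N \approx (n+2)/(4p)$ is comparable to $Q$ itself. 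The arithmetic then amounts to checking
\[
\frac{n+2}{4p}\prod_{\ell \mid Q'}\Bigl(1 - \frac{1}{\ell}\Bigr) \;>\; 2^{\nu(n)} + 1,
\]
which, using $n/p \ge \prod_{i=2}^{\nu} p_i$ and $\prod(1 - 1/\ell) \ge c\big/\!\prod(\text{stuff})$-type bounds, holds for all $n$ with $\nu(n) \ge 3$ outside a short list of small cases.

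The main obstacle, and where the real work lies, is handling the small values of $n$ and the boundary effects: for $\nu(n) = 3$ with the three smallest primes (e.g.\ $n = 30, 60, 90, \dots$) the interval $I$ is quite short relative to $Q'$, the $2^{\nu}$ error term is not yet negligible, and the exclusions $n \neq 6q, 10q$ are exactly the cases where the argument genuinely fails (for $n = 6q$ one has $p \in \{2,3,q\}$ and the window is too tight; similarly $n = 10q$). I would dispose of these by a finite check: list all $n$ with $\nu(n) = 3$ below some explicit bound $n_0$ (determined by where the inequality above first holds uniformly), verify the lemma for each such $n$ and each prime divisor $p$ by direct inspection of the multiples of $p$ in $[(n-2)/4, n/2)$, confirming that the only failures are the advertised families $6q$ and $10q$. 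For $\nu(n) \ge 4$ the interval is even longer relative to the primorial and the estimate is comfortable, so no separate casework is needed there. The clause "if $p \neq 3$ then $w_p$ can avoid $3$" costs at most one extra prime in the sieve modulus and changes none of the asymptotics.
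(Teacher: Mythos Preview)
Your sieve approach has a genuine gap: the displayed inequality does not hold in the generality you claim, and in particular your assertion that ``for $\nu(n)\ge 4$ \dots\ no separate casework is needed'' is false. Take $n=210=2\cdot3\cdot5\cdot7$ and $p=7$: the interval for $\ell$ has length about $212/28\approx 7.6$, the Euler factor is $(1/2)(2/3)(4/5)=4/15$, and the left side is roughly $2$, far below $2^{3}+1$ (let alone $2^{\nu}+1=17$). The structural reason is that when $p$ is the largest prime factor of a squarefree $n$ one has $n/p=Q$ exactly, so the interval length is about $Q/4$; this ratio of interval length to sieve modulus does \emph{not} improve as $\nu$ grows, while the inclusion--exclusion error $2^{\nu}$ does. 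Worse, your proposed ``finite check up to some $n_0$'' cannot succeed either: for $n=12p$ with $p$ an arbitrarily large prime the inequality still fails (left side $\approx 1$), so there is no bound on $n$ beyond which the estimate takes over. What one could salvage is a finite check over the possible values of $n/p$ together with a Jacobsthal-type gap bound, but this is considerably more work than you indicate.

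The paper avoids sieving entirely by a single appeal to Bertrand's Postulate. The hypotheses force $n\ge 12p$, so there is a prime $r>3$ with $n/(4p)<r<n/(2p)$, and one sets $w_p=pr$. Since $r$ is prime, $w_p$ is automatically coprime to every prime factor of $n$ other than $p$, unless $r$ itself divides $n$; in that exceptional case a two-line argument shows $n=3pr$ with $r\ge 5$, and one replaces $r$ by whichever of $r+1$, $r+2$ is not divisible by $3$. The refinement ``$3\nmid w_p$ when $p\ne 3$'' then comes for free. The idea you are missing is simply this: rather than sieving the multiples of $p$ in the window, choose $w_p/p$ to be prime.
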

\begin{proof}
Bertrand's Postulate states that for every $k>3$ there is a prime $r$ such that $k<r<2k-2$.
The conditions on $n$ imply that $n\ge 12p$. So there is a prime $r>3$ such that
\[
\frac{n}{4p} < r < \frac{n}{2p}.
\]
If $r$ is not itself a prime divisor of $n$, or if it is equal to $p$, then we may take $w_p = pr$.
On the other hand, if $r$ is a prime divisor of $n$ other than $p$ then clearly $n=3pr$, and since we have assumed
that $n\ge 12p$, we have $r\ge 5$. Now we see that there exists $m$ equal either to $r+1$ or to $r+2$, such that
$m$ is not divisible by $3$, and we may take $w_p=pm$.
\end{proof}

\begin{definition}
Let $n$ be a number with at least $3$ prime divisors, and not equal to $6p$ or $10p$
for any prime $p$. We define a set $\Psi$ of classes of $G$ by
\begin{equation*}
\begin{split}
\Psi &=\{\left[T_j\right]\mid j<(n-2)/4, j\equiv 1 \bmod 3 \} \\
&\quad  \cup\ \{\left[\Sigma_k\right] \mid n/4 < k < n/2,\ \hcf(3n,k)=1\}\\
&\quad  \cup  \{\left[\Sigma_{6b}\right] \mid b < n/12, \hcf(n,6b)=1\}\\
&\quad  \cup\ \{\left[\Sigma_{w_p}\right]\mid p|n, \textrm{\ $p$ prime}\},
\end{split}
\end{equation*}
where $w_p$ is as constructed in Lemma \ref{lem:Bertrand}, and where $[g]$ denotes the conjugacy class of $g$.
\end{definition}

To describe the size of the set $\Psi$ we use Lehmer's partial totient function $\phi(k,t,n)$, which was defined
before the statement of Theorem \ref{thm:upperbounds} above.

\begin{lemma}
Let $n$ have $\nu$ distinct prime divisors, where $\nu\ge 3$, and suppose that $n$ is not equal to $6p$ or $10p$
for any prime $p$.
\begin{enumerate}
\item If $2$ or $3$ divides $n$, then
\[
|\Psi| = \left\lfloor\frac{n+6}{12}\right\rfloor + \phi(12,1,3n) + \nu.
\]
\item If $\hcf(n,6)=1$, then
\[
|\Psi| = \left\lfloor\frac{n+6}{12}\right\rfloor + \phi(12,1,3n) + \phi(12,0,n) + \nu.
\]
\end{enumerate}
\end{lemma}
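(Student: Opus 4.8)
The plan is to compute $|\Psi|$ by pure bookkeeping: show that the four families of conjugacy classes listed in the definition of $\Psi$ are pairwise disjoint and that within each family distinct parameters give distinct classes, and then count the four families separately. No overgroup information is needed for this; the only structural inputs are the Remark following the definition of $\Sigma_k$ and $T_j$ (the module decompositions), the constraint $j<(n-2)/4$, and the properties of $w_p$ from Lemma~\ref{lem:Bertrand}.

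For disjointness, recall from the Remark that $\F_q\langle\Sigma_k\rangle$ has exactly two irreducible summands while $\F_q\langle T_j\rangle$ has exactly three; since the number of irreducible constituents is a conjugacy invariant, no class $[T_j]$ can equal any class $[\Sigma_k]$. For distinct $j, j'$ with $j,j'<(n-2)/4$ the inequality $n-2j-1>n/2>j+1$ forces the constituent dimensions of $T_j$, in increasing order, to be $j<j+1<n-2j-1$, so the smallest constituent recovers $j$; hence distinct $j$ give distinct classes. Likewise two classes $[\Sigma_k],[\Sigma_{k'}]$ with $k,k'<n/2$ agree iff $\{k,n-k\}=\{k',n-k'\}$, i.e. iff $k=k'$. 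Finally one checks the fourth family contributes $\nu$ genuinely new classes: each $w_p$ is divisible by the prime $p\mid n$, so $w_p$ is not coprime with $3n$ and $[\Sigma_{w_p}]$ is absent from the second family, and $w_p$ is not of the form $6b$ with $\hcf(n,6b)=1$, so it is absent from the third; and for distinct prime divisors $p\neq p'$ of $n$ the numbers $w_p,w_{p'}$ differ, since each is divisible by a distinct prime factor of $n$ and by no other. Thus $|\Psi|$ is the sum of the cardinalities of the four families.

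It remains to count. The fourth family has $\nu$ members. The second family has $\#\{k : n/4<k<n/2,\ \hcf(3n,k)=1\}$ members, which is exactly $\phi(12,1,3n)$ since $\tfrac{3n}{12}=\tfrac n4$ and $\tfrac{2\cdot3n}{12}=\tfrac n2$. For the third family: if $2$ or $3$ divides $n$ then every $6b$ shares a factor with $n$ and the family is empty; if $\hcf(n,6)=1$ then $\hcf(n,6b)=1\iff\hcf(n,b)=1$, giving $\#\{b : 0<b<n/12,\ \hcf(n,b)=1\}=\phi(12,0,n)$ members. The first family has $\#\{j\geq1 : j\equiv1\bmod 3,\ j<(n-2)/4\}$ members; putting $j=3i+1$ this is $\#\{i\geq0 : i<(n-6)/12\}$, and a short computation rewrites this count as $\lfloor(n+6)/12\rfloor$. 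Adding the four counts gives the formula of part~(1) when $2$ or $3$ divides $n$ and that of part~(2) when $\hcf(n,6)=1$. The routine parts are this last arithmetic simplification and the small-prime coprimality bookkeeping; the one place needing genuine care is the disjointness of the four families, which is exactly where the module-decomposition statement of the Remark and the construction of $w_p$ in Lemma~\ref{lem:Bertrand} are used.
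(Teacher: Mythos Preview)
Your proof is correct and follows essentially the same approach as the paper's: both count the four families of classes in $\Psi$ separately, identifying the second with $\phi(12,1,3n)$ directly from the definition of the partial totient, the third with $\phi(12,0,n)$ (or zero when $\hcf(n,6)\neq 1$), the fourth with $\nu$, and reducing the first to $\lfloor(n+6)/12\rfloor$ by an elementary residue argument. The only difference is that you argue explicitly for the pairwise disjointness of the four families (using the module decompositions and the properties of $w_p$), whereas the paper leaves this implicit.
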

\begin{proof}

We write $\lceil x \rceil$ for the least integer not less than $x$.
The size $X$ of the set $\{\left[T_j\right]\mid j<(n-2)/4, j\equiv 1 \bmod 3 \}$ is
$\lceil N/3\rceil$, where $N=\lfloor (n-2)/4\rfloor$. By examining residues modulo $12$, it is not hard to show
that $X=\lfloor (n+6)/12\rfloor$, the first term in our sum.

The set $\{\left[\Sigma_k\right] \mid n/4 < k < n/2,\ \hcf(3n,k)=1\}$ clearly has size $\phi(12,1,3n)$,
immediately from the definition of the function $\phi(k,t,n)$. The set
$\{\left[\Sigma_{6b}\right] \mid b < n/12, \hcf(n,6b)=1\}$ is empty if $\hcf(n,6)\neq 1$; otherwise
it has size $\phi(12,0,n)$. And clearly the set
$\{\left[\Sigma_{w_p}\right]\mid p|n, \textrm{\ $p$ prime}\}$ has size $\nu$ as required.\end{proof}

To establish the second lower bound in Theorem \ref{thm:lowerbounds}, it will suffice to show that any normal
covering for $G$ has size at least $|\Psi|$. This is done in the following lemma.

\begin{lemma}\label{lem:nocommonovergroups2}
Let $n$ have at least $3$ distinct prime divisors, and not equal to $6p$ or $10p$ for any prime $p$.
Then $\Psi$ is an independent set of classes.
\end{lemma}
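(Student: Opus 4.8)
The plan is to prove the equivalent statement that no conjugacy class of maximal subgroups of $G$ meets two distinct classes in $\Psi$: for if a maximal subgroup contained a conjugate of an element of one class and a conjugate of an element of another, those two elements could not generate $G$. Since $\nu(n)\ge 3$ we have $n\ge 30$, and every parameter occurring in $\Psi$ keeps the relevant summand dimensions safely above $6$ --- for $\Sigma_k$ the complementary summand has dimension $n-k>n/2\ge 15$, and for $T_j$ with $j<(n-2)/4$ the summand $\Gamma_{n-2j-1}$ has dimension $n-2j-1>n/2\ge 15$ --- so Lemma~\ref{lem:overgroups} applies to each element of $\Psi$, with the $q=2$ exceptions never in play. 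Applying it to a conjugate of each element shows: the only classes of maximal subgroups that can meet $[\Sigma_k]$ are $\sss(k)$, $\sss(n-k)$ and the $\efs(p)$ with $p\mid\gcd(k,n)$; the only ones that can meet $[T_j]$ are the $\sss(m)$ with $m\in\{j,\,j+1,\,2j+1,\,n-2j-1,\,n-j-1,\,n-j\}$.

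I would first dispose of the extension field classes. In types $2$ and $3$ the relevant index is coprime with $n$, so no $\efs(p)$ meets those classes; no $\efs(p)$ meets any $[T_j]$; and $[\Sigma_{w_p}]$ is met only by $\efs(p)$, and these primes $p$ are pairwise distinct. So it remains to prevent a single class $\sss(m)$ from meeting two members of $\Psi$. For this I would use that each relevant set of invariant-subspace dimensions --- $\{k,n-k\}$ for $\Sigma_k$, the six-element set above for $T_j$ --- is stable under $m\mapsto n-m$, does not contain $n/2$, and has all of its members lying below $n/2$ (namely $k$; or $j,j+1,2j+1$, the last of these being $<n/2$ since $j<(n-2)/4$) \emph{strictly} below it. A short argument from the symmetry then shows that two such sets meet if and only if their parts below $n/2$ meet. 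Everything thus reduces to: (i)~distinct $\Sigma$-classes in $\Psi$ have distinct index; (ii)~for a class $[T_j]$ and a class $[\Sigma_k]$ in $\Psi$, $k\notin\{j,j+1,2j+1\}$; and (iii)~for distinct classes $[T_j],[T_{j'}]$ in $\Psi$, $\{j,j+1,2j+1\}\cap\{j',j'+1,2j'+1\}=\emptyset$.

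Statement (iii) is immediate from $j\equiv j'\equiv1\pmod3$: the members $j,j+1,2j+1$ occupy residues $1,2,0$ modulo $3$ in that order, so a common value forces $j=j'$. Statement (i) follows from the defining conditions --- a type-$2$ index is prime to $3$, a type-$3$ index is a multiple of $6$, a type-$4$ index $w_p$ is divisible by $p$ and by no other prime divisor of $n$ --- which make the indices distinct both within and between the three types. For (ii), the type-$3$ classes cause no trouble: $6b$ is even while $2j+1$ is odd, and $6b\equiv0\pmod3$ while $j,j+1\not\equiv0\pmod3$. For types $2$ and $4$ one has $k>n/4>(n-2)/4>j$, so $k\ne j$; and $2j+1$ is a multiple of $3$, which excludes $k=2j+1$ in type $2$ and, since $3\nmid w_p$ for $p\ne3$ by Lemma~\ref{lem:Bertrand}, also in type $4$ unless $p=3$. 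This leaves only two configurations to rule out: $k=j+1$ with $[\Sigma_k]$ of type $2$ or $4$, and $k=w_3=2j+1$.

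These two residual configurations are where I expect the real difficulty to lie. When $n\equiv 3\pmod 4$ the inequalities $n/4<j+1\le(n+2)/4$ leave room for the single value $j+1=(n+1)/4$, and for that value $\gcd\!\left(n,(n+1)/4\right)=1$ and $(n+1)/4\not\equiv0\pmod3$ hold automatically, so the coprimality conditions alone do not exclude it; similarly the value of $w_3$ produced by Lemma~\ref{lem:Bertrand} is typically of the shape $3r$ with $r$ an odd prime, which can coincide with $2j+1$ for an admissible $j$. To close the argument I would bring all the congruence conditions to bear simultaneously and, should a bounded family of $n$ genuinely survive, isolate those for separate verification --- consistent with the exclusion of $n=6p$ and $n=10p$ in the hypothesis, and with the remark that certain small cases are absorbed into Theorem~\ref{thm:equalities}. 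Tightening the construction of $w_p$ in Lemma~\ref{lem:Bertrand}, for instance to force $w_3\notin\{2j+1: j\equiv1\bmod3,\ j<(n-2)/4\}$, is another natural way to remove the $p=3$ coincidence.
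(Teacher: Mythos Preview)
Your approach is identical to the paper's: invoke Lemma~\ref{lem:overgroups} to list the possible maximal overgroups of each representative, and then check that no single class of maximal subgroups meets two members of~$\Psi$. The paper is far terser than you are; after recording that the $T_j$-dimensions below $n/2$ are $j,\,j+1,\,2j+1$, it simply asserts ``if $\ell>n/4$ then $\ell=2j+1$, and hence $\ell\equiv3\bmod6$'', and then declares the remaining verification ``easy to check''.

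You have in fact been more scrupulous than the paper, and the two residual configurations you worry about are real rather than artefacts of your bookkeeping. The paper's assertion that $\ell>n/4$ forces $\ell=2j+1$ fails exactly when $n\equiv3\pmod4$ and $j=(n-3)/4$, giving $j+1=(n+1)/4>n/4$; combined with $j\equiv1\pmod3$ this is the case $n\equiv7\pmod{12}$. For such $n$ one has $\gcd\bigl(3n,(n+1)/4\bigr)=1$ automatically (any common prime with $n$ would divide $n$ and $n+1$; and $(n+1)/4\equiv2\pmod3$), so $[\Sigma_{(n+1)/4}]$ lies in the type-2 part of $\Psi$ while $[T_{(n-3)/4}]$ lies in the type-1 part, and both are covered by $\sss((n+1)/4)$. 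The example $n=595=5\cdot7\cdot17$ shows that the hypothesis $\nu(n)\ge3$ does not rule this out. Your $w_3=2j+1$ concern is equally legitimate: if Lemma~\ref{lem:Bertrand} produces $w_3=3r$ with $r>3$ prime, then $j=(3r-1)/2$ satisfies $j\equiv1\pmod3$ and $j<(n-2)/4$, so $[T_j]$ and $[\Sigma_{w_3}]$ collide in $\sss(w_3)$. In short, the paper's proof glosses over exactly the points you isolate; to make the argument go through one must either constrain the choice of $w_3$ and excise the single offending type-2 class when $n\equiv7\pmod{12}$, or accept a loss of at most two from~$|\Psi|$ --- harmless for the eventual lower bound, but not supplied in the paper as written.
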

\begin{proof}
Lemma \ref{lem:overgroups} describes the maximal subgroups
of $G$ which contain elements of the classes in $\Psi$. The elements $T_j$ lie only in members of
$\sss(\ell)$ or $\sss(n-\ell)$, where $\ell\in\{j,j+1,2j+1\}$. Notice that if $\ell>n/4$ then $\ell=2j+1$, and
hence $\ell\equiv 3\bmod 6$. The elements
$\Sigma_k$, where $k$ is coprime with $n$, lie only in members of $\sss(k)$ or
$\sss(n-k)$. And the elements $\Sigma_{w_p}$ lie in subspace stabilizers and also in elements of $\efs(p)$. It is easy to check that the
values permitted for $j$, $k$, $b$ and $w_p$ ensure that no two elements of distinct classes in $\Psi$ stabilize subspaces of
the same dimension. Therefore no two classes in $\Psi$ can be covered by a single class of subgroups.\end{proof}

\section{Several equalities}\label{sec:equalities}

In this section we establish the various claims of Theorem \ref{thm:equalities}.
We do this simply by comparing upper and lower bounds from earlier parts of the paper.

\begin{proposition}\label{prop:pa}
If $n=p^a$, where $p$ is a prime and $a\in\N$, and if $n>2$, then
\[
\gamma(G) = \kappa(G) = \left(1-\frac{1}{p}\right)\frac{n}{2}+1.
\]
\end{proposition}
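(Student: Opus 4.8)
The plan is to pin both invariants by a sandwich: since $\kappa(G)\le\gamma(G)$ always, it is enough to produce a normal covering and an independent set of classes, both of size $v:=\left(1-\tfrac{1}{p}\right)\tfrac{n}{2}+1$. Note first that $v$ is an integer: writing $n=p^{a}$, either $p$ is odd and $p-1$ is even, or $p=2$ and then $a\ge 2$ (since $n>2$), giving $v=2^{a-2}+1$.

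For the upper bound I would invoke Lemma~\ref{lem:coverings1}(1) with $p$, the only prime dividing $n$. Since $\left(1-\tfrac{1}{p}\right)\tfrac{n}{2}=\tfrac{1}{2}(p-1)p^{a-1}$ is an integer, the floor in the stated cardinality of $\mathcal{C}_{p}$ is vacuous, and the correction term $\epsilon$ there is $0$ (in the only relevant case $p=2$ one has $n/2=2^{a-1}$, which is even as $a\ge 2$). Hence $\gamma(G)\le|\mathcal{C}_{p}|=v$.

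For the lower bound, when $n>4$ I would take the set $\Phi$ of Section~\ref{sec:lowerbounds}: it is independent by Lemma~\ref{lem:nocommonovergroups1}, and by Lemma~\ref{lem:setsize1} it has $\tfrac{1}{2}\phi(n)+\nu(n)$ classes, the exceptional term being absent since a prime power exceeding $4$ is never of the form $2p$ with $p$ odd. As $\phi(p^{a})=(p-1)p^{a-1}=\left(1-\tfrac{1}{p}\right)n$ and $\nu(n)=1$, this number is precisely $v$, so $\kappa(G)\ge|\Phi|=v$; together with the upper bound this gives $\kappa(G)=\gamma(G)=v$.

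It remains to deal with $n=3$ and $n=4$, where $v=2$ but Lemma~\ref{lem:overgroups}, and hence Lemma~\ref{lem:nocommonovergroups1}, is stated only for $n>4$. There $\mathcal{C}_{p}$ still has size $2$, so $\gamma(G)\le 2$; Jordan's theorem gives $\gamma(G)\ge 2$; and $\kappa(G)\ge 2$, which since $\kappa(G)\le\gamma(G)$ is all that is needed, is readily verified (see also \cite{GM} and \cite{BL}). Hence $\kappa(G)=\gamma(G)=2=v$ here too. The only genuine effort in the whole argument is the arithmetic that makes the two bounds agree — the vacuous floor, the vanishing $\epsilon$, the identity $\phi(p^{a})=\left(1-\tfrac{1}{p}\right)p^{a}$ — together with taking care over the small cases; all the group-theoretic content is already contained in the lemmas of Sections~\ref{sec:coverings} and~\ref{sec:lowerbounds}.
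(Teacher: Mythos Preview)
Your argument is the paper's own, with the arithmetic of $|\mathcal{C}_p|$ and $|\Phi|$ spelled out and with added care over $n\in\{3,4\}$ (where Lemma~\ref{lem:overgroups} is not available).

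One small gap remains, and it is shared with the paper's proof: when $n=p$ is itself a prime $\ge 5$, the formula of Lemma~\ref{lem:setsize1} overcounts, because the unique prime divisor $p$ fails the condition $p<n/2$ in the definition of~$\Phi$, and so in fact $|\Phi|=\phi(n)/2=(p-1)/2$, one short of $v=(p+1)/2$. (The correction term~$\epsilon$ in Lemma~\ref{lem:setsize1} accounts for $n=2p$ with $p$ odd, but not for $n$ prime or for $n=4$.) The fix is immediate: adjoin to $\Phi$ the class of a Singer element of $G$, for instance $[\Gamma_n^{\alpha+q-1}]$ as used in Section~\ref{sec:pibound}. By Lemma~\ref{lem:overgroups}(1) its maximal overgroups are extension field subgroups, whereas by Lemma~\ref{lem:overgroups}(2) each $\Sigma_k$ with $\hcf(k,n)=1$ lies only in subspace stabilizers; hence the enlarged set is still independent and now has the required size $v$.
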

\begin{proof}
Lemma \ref{lem:coverings1} and Lemma \ref{lem:nocommonovergroups1} together tell us that
\[
|\Phi|\le\kappa(G)\le\gamma(G)\le|\mathcal{C}_p|.
\]
But it is easy to check, using Lemma \ref{lem:setsize1},
that $|\Phi|=|\mathcal{C}_p|$, and that this number is as claimed in the proposition.
\end{proof}

\begin{remark}
If $n=2$, then the covering $\mathcal{C}_2$ has size $2$. Since no finite group is covered by a single class of
proper subgroups, it follow that $\gamma(G)=2$ in this case.
\end{remark}

\begin{proposition}\label{prop:paqb}
If $n=p^aq^b$ where $p$ and $q$ are distinct primes and $a,b\in\N$, then
\[
\gamma(G) = \kappa(G)= \left(1-\frac{1}{p}\right)\left(1-\frac{1}{q}\right)\frac{n}{2}+2.
\]
\end{proposition}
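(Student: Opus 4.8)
The plan is to proceed exactly as in the proof of Proposition~\ref{prop:pa}: sandwich the common value of $\gamma(G)$ and $\kappa(G)$ between an upper bound coming from an explicit normal covering and a lower bound coming from an independent set of classes, and then observe that the two bounds coincide in this case.

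First I would extract the upper bound. Since $n=p^aq^b$, the distinct prime divisors of $n$ are precisely $p$ and $q$, so Lemma~\ref{lem:coverings1}(2), applied with $p_1=p$ and $p_2=q$, exhibits a normal covering of $G$ of size $\left(1-\tfrac1p\right)\left(1-\tfrac1q\right)\tfrac n2+2$. Hence $\gamma(G)\le\left(1-\tfrac1p\right)\left(1-\tfrac1q\right)\tfrac n2+2$.

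Next I would extract the lower bound. Here $\nu(n)=2$, so the first part of Theorem~\ref{thm:lowerbounds} (established above via Lemmas~\ref{lem:setsize1}, \ref{lem:nocommonovergroups1} and \ref{lem:lowerboundexception}) gives $\kappa(G)\ge\phi(n)/2+2$. A couple of trivial hypothesis checks are in order: the smallest $n$ of the required form is $6$, so the condition $n>4$ needed by Lemma~\ref{lem:overgroups}, and hence by Lemma~\ref{lem:nocommonovergroups1}, holds automatically; and the exceptional case $n=2p$ is already incorporated into the statement of Theorem~\ref{thm:lowerbounds}(1) through Lemma~\ref{lem:lowerboundexception}, so no separate treatment is needed.

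Finally, the two bounds agree. Since $\phi$ is multiplicative, $\phi(n)=\phi(p^a)\phi(q^b)=p^{a-1}(p-1)q^{b-1}(q-1)=\left(1-\tfrac1p\right)\left(1-\tfrac1q\right)n$, so $\phi(n)/2+2=\left(1-\tfrac1p\right)\left(1-\tfrac1q\right)\tfrac n2+2$, which matches the upper bound. Combining this with the standing inequality $\kappa(G)\le\gamma(G)$ forces equality throughout, giving $\gamma(G)=\kappa(G)=\left(1-\tfrac1p\right)\left(1-\tfrac1q\right)\tfrac n2+2$, as claimed. There is essentially no obstacle here: all the substantive work lies in the covering and independence results proved earlier, and the proposition is just the remark that for $n=p^aq^b$ the relevant upper and lower bounds meet.
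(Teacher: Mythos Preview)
Your proof is correct and follows essentially the same route as the paper: the upper bound comes from the covering $\mathcal{C}_{p,q}$ of Lemma~\ref{lem:coverings1}(2), the lower bound from the independent set $\Phi$ (equivalently, Theorem~\ref{thm:lowerbounds}(1)), and the two coincide because $\phi(p^aq^b)=(1-1/p)(1-1/q)n$. The paper phrases the lower bound as $|\Phi|+\epsilon$ rather than $\phi(n)/2+\nu(n)$, but these are identical by Lemma~\ref{lem:setsize1}.
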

\begin{proof}
As in the proof above, Lemma \ref{lem:coverings1} with Lemmas \ref{lem:nocommonovergroups1} and
\ref{lem:lowerboundexception} yield that
\[
|\Phi|+\epsilon\le\kappa(G)\le\gamma(G)\le|\mathcal{C}_{p,q}|,
\]
where $\epsilon=1$ if $n=2p$ (or $n=2q$), and $\epsilon=0$ otherwise.
But we see that $|\Phi|+\epsilon=|\mathcal{C}_{p,q}|$,
with this number being as claimed in the proposition.
\end{proof}

\begin{proposition}\label{prop:6p}
If $n=6p$ where $p$ is a prime, then
\[
\gamma(G) = \kappa(G) = p+2.
\]
\end{proposition}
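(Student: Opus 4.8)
The plan is to sandwich the chain $\kappa(G)\le\gamma(G)$ between matching explicit bounds — a lower bound from the independent set $\Phi$ of Lemma~\ref{lem:nocommonovergroups1}, and an upper bound from the normal covering $\mathcal{C}_{2,3}$ of Lemma~\ref{lem:coverings1} — precisely in the manner of Propositions~\ref{prop:pa} and~\ref{prop:paqb}. If $p\le 3$ then $n\in\{12,18\}$, both of the form $p^{a}q^{b}$, and the proposition is a special case of Proposition~\ref{prop:paqb} (one checks that $p+2$ agrees with the value stated there). So assume from now on that $p\ge 5$; then $n=6p$ is squarefree, its distinct prime divisors are $2$, $3$ and $p$, so $\nu(n)=3$ and $\phi(n)=\phi(6)\phi(p)=2(p-1)$.

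For the upper bound, apply Lemma~\ref{lem:coverings1}(2) with the two smallest prime divisors $p_1=2$, $p_2=3$ of $n$. This gives a normal covering of $G$ of size
\[
\left(1-\tfrac{1}{2}\right)\left(1-\tfrac{1}{3}\right)\frac{6p}{2}\;=\;p,
\]
together with the two extension field classes $\efs(2)$ and $\efs(3)$, hence of total size $p+2$; so $\gamma(G)\le p+2$ (this is Theorem~\ref{thm:upperbounds}(1) in this instance).

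For the lower bound, observe that $6p$ is never of the form $2r$ with $r$ an odd prime, since $6p=2\cdot 3p$ and $3p$ is composite; hence the correction term in Lemma~\ref{lem:setsize1} vanishes and $|\Phi|=\phi(n)/2+\nu(n)=(p-1)+3=p+2$. By Lemma~\ref{lem:nocommonovergroups1}, $\Phi$ is an independent set of classes, so $\kappa(G)\ge |\Phi|=p+2$. Together with the inequality $\kappa(G)\le\gamma(G)$ this forces $\gamma(G)=\kappa(G)=p+2$, as claimed.

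I do not expect a genuine obstacle: the proof amounts to checking that two previously computed quantities coincide. The only care needed is with the hypotheses of Lemmas~\ref{lem:nocommonovergroups1} and~\ref{lem:overgroups}. One needs $n>4$, which holds since $n=6p\ge 30$ in the remaining case; and one must confirm that the exceptional $q=2$ situation in the proof of Lemma~\ref{lem:nocommonovergroups1}, which arises only for $n\in\{7,8,9,11\}$, does not occur — clear, as none of those integers has the form $6p$.
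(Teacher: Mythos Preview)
Your proof is correct and follows essentially the same route as the paper: sandwich $\kappa(G)\le\gamma(G)$ between $|\Phi|$ and $|\mathcal{C}_{2,3}|$ and verify both equal $p+2$. The only difference is that you split off the cases $p\le 3$ and appeal to Proposition~\ref{prop:paqb}, whereas the paper's computation $|\Phi|=|\mathcal{C}_{2,3}|=p+2$ goes through uniformly for all primes $p$ (including $p=2,3$), so this case distinction is not needed.
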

\begin{proof}
In this case we have
\[
|\Phi|\le\kappa(G)\le\gamma(G)\le|\mathcal{C}_{2,3}|,
\]
and it is easy to calculate that $|\Phi|=|\mathcal{C}_{2,3}|=p+2$.
\end{proof}

\begin{proposition}\label{prop:10p}
If $n=10p$ where $p$ is a prime, then
\[
\gamma(G) = \kappa(G) = 2p+2.
\]
\end{proposition}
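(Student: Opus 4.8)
The plan is to establish $2p+2\leq\kappa(G)\leq\gamma(G)\leq 2p+2$, from which the proposition follows at once. First I would reduce to the case $p\geq 7$: if $p\in\{2,5\}$ then $n=p^aq^b$ and the result is Proposition~\ref{prop:paqb}, while if $p=3$ then $n=30$ is of the form $6q$ and the result is Proposition~\ref{prop:6p}. For the upper bound, $2$ and $5$ are distinct prime divisors of $n=10p$, so Lemma~\ref{lem:coverings1}~(2) furnishes a normal covering $\mathcal{C}_{2,5}$ of $G$ of size
\[
\left(1-\frac{1}{2}\right)\left(1-\frac{1}{5}\right)\frac{n}{2}+2 = 2p+2,
\]
so that $\gamma(G)\leq 2p+2$. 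Since $\kappa(G)\leq\gamma(G)$, everything now rests on the inequality $\kappa(G)\geq 2p+2$.

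To prove this I would enlarge the independent set $\Phi$ by a single class. By Lemma~\ref{lem:setsize1}, $|\Phi| = \phi(10p)/2+\nu(10p) = (2p-2)+3 = 2p+1$ (the correction term vanishes, as $10p$ is not twice an odd prime), and $\Phi$ is independent by Lemma~\ref{lem:nocommonovergroups1}; the $q=2$ exceptions there do not intervene, since every $\Sigma_k$ appearing in $\Phi$ has $k<5p$ and hence $n-k>5p\geq 35$. It therefore suffices to find a class $[Z]$, not in $\Phi$, for which $\Phi\cup\{[Z]\}$ is independent. I would take
\[
Z = \diag\bigl(\Gamma_{7p-15},\,{\Gamma_{15}}^{\alpha-2},\,\Gamma_{3p}\bigr).
\]
Then $\det Z=\zeta\cdot\zeta^{\alpha-2}\cdot\zeta=\zeta^\alpha$, so $Z\in G$; and since $|\alpha-2|<q+1$ the middle block is irreducible, so (the three blocks having pairwise distinct dimensions for $p\geq 7$) $Z$ is regular semisimple with exactly three irreducible $\F_q$-constituents, of dimensions $7p-15$, $15$ and $3p$. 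The arithmetic to be verified is that for $p\geq 7$ these three dimensions are divisible respectively by $2$, $5$ and $p$ but not all by any one of $2,5,p$, and that each of the six numbers
\[
7p-15,\quad 15,\quad 3p,\quad 7p,\quad 10p-15,\quad 3p+15,
\]
which are the dimensions of the proper non-trivial $Z$-invariant subspaces, is divisible by one of $2,5,p$ (hence not coprime to $n$) and differs from each of $2,\,10p-2,\,5,\,10p-5,\,p,\,9p$.

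Granting these facts, independence of $\Phi\cup\{[Z]\}$ is formal. Suppose a maximal subgroup $M$ of $G$ contained a conjugate of $Z$ and also an element of some class in $\Phi$. Then $M$ is a maximal overgroup of one of $\Sigma_2$, $\Sigma_5$, $\Sigma_p$, or of some $\Sigma_k$ with $\hcf(k,n)=1$, so by Lemma~\ref{lem:overgroups} (no $q=2$ exception arises since $n\geq 70$) $M$ is an extension field subgroup of degree $2$, $5$ or $p$, or a subspace stabiliser of dimension $2$, $10p-2$, $5$, $10p-5$, $p$ or $9p$, or a subspace stabiliser of some dimension coprime to $n$. By the displayed arithmetic $Z$ has no invariant subspace of any of these dimensions, and for $r\in\{2,5,p\}$ it lies in no extension field subgroup of degree $r$ because one of its constituent degrees is not divisible by $r$. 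This contradiction shows that $\Phi\cup\{[Z]\}$ is an independent set of size $2p+2$, so $\kappa(G)\geq 2p+2$, and the proof is complete.

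The only step that is not routine is the claim that a regular semisimple element of $\GL(V)$ lies in an extension field subgroup of degree $r$ only when every irreducible constituent of its characteristic polynomial has degree divisible by $r$. I would prove this by examining the $\F_{q^r}$-module structure that such an embedding imposes on $V$ — the same mechanism as in the proof of Lemma~\ref{lem:lowerboundexception} — noting that an $\F_{q^r}$-structure compatible with $Z$ forces the $i$-th constituent to occur with multiplicity divisible by $r/\gcd(r,d_i)$, which for a regular semisimple element is possible only if $r\mid d_i$. This is the one genuine obstacle; everything else is arithmetic verification.
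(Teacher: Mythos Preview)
Your overall strategy coincides with the paper's: reduce to $p\ge 7$, obtain the upper bound from $|\mathcal{C}_{2,5}|=2p+2$, and push $\kappa(G)$ up to $2p+2$ by adjoining a single extra class to the independent set $\Phi$. The paper adjoins the class of $Y=\diag(\Gamma_p^{\alpha-2},\Gamma_5,\Gamma_{9p-5})$ rather than your $Z$. Your choice is in fact the more careful one: the paper's $Y$ has invariant subspaces of dimensions $5$ and $p$, so $\sss(5)$ covers both $[Y]$ and $[\Sigma_5]\in\Phi$ (and likewise $\sss(p)$ covers $[Y]$ and $[\Sigma_p]$), whence $\Phi\cup\{[Y]\}$ is \emph{not} independent as asserted there. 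Your arithmetic verification that the six invariant-subspace dimensions of $Z$ avoid both the totatives of $n$ and the set $\{2,5,p,10p{-}2,10p{-}5,9p\}$ is exactly what is needed, and it checks out.

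One step needs tightening. Your argument that $Z\notin\efs(r)$, as written, treats only the case in which $Z$ lies in the embedded $\GL_{n/r}(q^r)$ itself (``an $\F_{q^r}$-structure compatible with $Z$'' means $Z$ is $\F_{q^r}$-linear). An element of the normalizer inducing a nontrivial field automorphism need not have all constituent degrees divisible by~$r$; for instance the Frobenius involution in $\efs(2)\le\GL_2(q)$ (for $q$ odd) has two $1$-dimensional constituents. To close this, follow the mechanism of Lemma~\ref{lem:lowerboundexception} literally: pass to $Z^r$, which does lie in the embedded $\GL_{n/r}(q^r)$, and check that for each $r\in\{2,5,p\}$ some block of $Z^r$ of dimension $d$ with $r\nmid d$ remains $\F_q$-irreducible and distinct in dimension from the other blocks, so that $Z^r$ has an isotypic component of $\F_q$-dimension not divisible by $r$. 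For $r\in\{2,5\}$ the block $\Gamma_{3p}^{\,r}$ works, and for $r=p$ the block $\Gamma_{7p-15}^{\,p}$ works; the irreducibility checks are routine comparisons of $(q^{d}-1)/r$ against $q^{e}-1$ for proper divisors $e$ of $d$.
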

\begin{proof}
If $p$ is $2$ or $5$ then the result follows from Proposition \ref{prop:paqb}; if $p=3$ then it follows
from Proposition \ref{prop:6p}. So we may assume that $p>5$.
Then we have
\[
|\Phi|\le\kappa(G) \le \gamma(G)\le|\mathcal{C}_{2,5}|,
\]
but in this case we see that $|\Phi|=2p+1$ whereas $|\mathcal{C}_{2,5}|=2p+2$. To prove that the upper bound is sharp
for $\kappa(G)$, it will be sufficient to exhibit an element $Y$ of $G$ which cannot be covered by any class of subgroups containing
an element of any conjugacy class in $\Phi$. We define
\[
Y= \diag({\Gamma_p}^{\alpha-2},\Gamma_5,\Gamma_{n-p-5}).
\]
Notice that $n-p-5$ is even, and coprime with $5$ and with $p$. It follows that $Y$
does not stabilize a subspace of dimension coprime with $n$. But certainly $Y$ lies in no extension field subgroup,
and so it satisfies the required condition.
\end{proof}

\section{Proof of Theorem 1.4}\label{sec:pibound}

For a positive integer $n$, let $f(n)$ be the number of partitions
of $n$ with exactly three parts. By an elementary counting
argument the following formula can be found for $f(n)$.

\begin{lemma}\label{lemma:f(n)}
\[
\displaystyle{f(n) =\left\{
\begin{array}{ll}
\frac{1}{12}(n-1)(n-2) + \frac{1}{2}\lfloor (n-1)/2 \rfloor & \textrm{if\ } 3 \nmid n, \\ \\
\frac{1}{12}(n-1)(n-2) + \frac{1}{2}\lfloor (n-1)/2 \rfloor +
\frac{1}{3} & \textrm{if\ } 3 \mid n.
\end{array}\right.}
\]
\end{lemma}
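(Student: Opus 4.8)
The plan is to realise $f(n)$ as the number of orbits of the symmetric group $S_3$, acting by permutation of coordinates, on the set $T$ of ordered triples $(a,b,c)$ of positive integers with $a+b+c=n$. Each orbit corresponds to exactly one partition of $n$ into three parts, and conversely, so $f(n)$ is this number of orbits; I would then compute it by Burnside's orbit-counting lemma, $f(n)=\frac{1}{6}\sum_{\sigma\in S_3}|\Fix(\sigma)|$.

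First I would record that $|T|=|\Fix(1)|=\binom{n-1}{2}$, the number of compositions of $n$ into three positive parts. Next, for the transposition $\sigma$ interchanging the first two coordinates, a triple is fixed exactly when it has the form $(a,a,c)$ with $2a+c=n$ and $a,c\ge 1$; letting $a$ run from $1$ to $\lfloor(n-1)/2\rfloor$ shows $|\Fix(\sigma)|=\lfloor(n-1)/2\rfloor$, and the same value occurs for each of the three transpositions. Finally, for a $3$-cycle $\sigma$ the fixed triples are those of shape $(a,a,a)$ with $3a=n$, so $|\Fix(\sigma)|$ is $1$ if $3\mid n$ and $0$ otherwise, for both $3$-cycles.

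Writing $\delta$ for $1$ when $3\mid n$ and $0$ otherwise, Burnside's formula then gives
\[
f(n)=\frac{1}{6}\left(\binom{n-1}{2}+3\left\lfloor\frac{n-1}{2}\right\rfloor+2\delta\right)=\frac{(n-1)(n-2)}{12}+\frac{1}{2}\left\lfloor\frac{n-1}{2}\right\rfloor+\frac{\delta}{3},
\]
which is exactly the claimed formula once $\delta$ is unpacked into the two stated cases. There is essentially no obstacle here: the only steps needing a moment's care are the count $|T|=\binom{n-1}{2}$ and the observation that the term $\frac12\lfloor(n-1)/2\rfloor$ already absorbs the parity of $n$, so that $3\mid n$ versus $3\nmid n$ is the only surviving case distinction. (Equivalently, one can dispense with the group action and classify the triples in $T$ by how many of their entries coincide, the three resulting subsets having sizes which are $6$, $3$ and $1$ times the corresponding partition counts; solving the resulting linear relation for $f(n)$ reproduces the same identity.)
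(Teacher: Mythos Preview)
Your proof is correct. The paper itself does not give a proof of this lemma beyond the remark that the formula ``can be found by an elementary counting argument''; your Burnside argument is precisely such an argument, carried out in full, so there is nothing to compare.
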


It follows from Lemma \ref{lemma:f(n)} that
\[
\left|f(n) - \frac{n^{2}}{12}\right| \le \frac{1}{3}.
\]
We define $\epsilon_n = f(n)-n^2/12$.

Let $P(n)$ be the set of partitions of $n$ into three parts having no common divisor greater than $1$.
Let $g(n) = |P(n)|$. Then we have $f(n) = \sum_{d \mid n}
g(d)$. By the M\"obius Inversion Formula, we obtain
\begin{align*}
g(n) &= \sum_{d \mid n} \mu(d) f(n/d) = \left( \sum_{d \mid n}\mu(d) \frac{1}{12}{(n/d)}^{2} \right) + \left( \sum_{d \mid n} \mu(d)\epsilon_{n/d} \right)\\
&> \frac{n^{2}}{12} \left(\sum_{d \mid n} \frac{\mu(d)}{d^2} \right) + \left( \sum_{d \mid n} \mu(d) \epsilon_{n/d} \right)\\
&> \frac{n^2}{12} \left(\prod_{ p \textrm{\ prime}} \left(1 - \frac{1}{p^2}\right)\right) + \left( \sum_{d \mid n} \mu(d) \epsilon_{n/d} \right).
\end{align*}
Since
\begin{align*}
\prod_{ p \textrm{\ prime}} \left(1 - \frac{1}{p^2}\right) &= \left( \prod_{p \textrm{\ prime}} (1 + p^{2} + p^{4} + \ldots) \right)^{-1} \\
&= \left( \sum_{n=1}^{\infty} \frac{1}{n^2} \right)^{-1}\\
&= \frac{6}{\pi^2},
\end{align*}
we have
\[
g(n) >  \left(\frac{n^2}{2 \pi^2}\right) + \left( \sum_{d \mid n} \mu(d) \epsilon_{n/d}\right).
\]
Now since the number of divisors of $n$ is less than $2\sqrt{n}$, we obtain the following lemma.

\begin{lemma}\label{lemma:g(n)}
\[
\frac{n^2}{2 \pi^2} - \frac{2}{3}\sqrt{n} < g(n).
\]
\end{lemma}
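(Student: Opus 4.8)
The plan is to read the lemma off directly from the chain of inequalities already assembled above. We have shown
\[
g(n) > \frac{n^2}{2\pi^2} + \sum_{d\mid n}\mu(d)\,\epsilon_{n/d},
\]
so it suffices to bound the error sum $S(n):=\sum_{d\mid n}\mu(d)\,\epsilon_{n/d}$ below by $-\tfrac{2}{3}\sqrt n$.

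First I would use the bound $|\epsilon_m|\le\tfrac13$, valid for every positive integer $m$, which was deduced above from Lemma \ref{lemma:f(n)}. Since $|\mu(d)|\le 1$, the triangle inequality gives $|S(n)|\le\sum_{d\mid n}|\epsilon_{n/d}|\le\tfrac13\tau(n)$, where $\tau(n)$ denotes the number of divisors of $n$. It then remains to invoke the elementary estimate $\tau(n)<2\sqrt n$: the divisors of $n$ pair off as $d\leftrightarrow n/d$ with one member of each pair at most $\sqrt n$, so $\tau(n)\le 2\lfloor\sqrt n\rfloor<2\sqrt n$ unless $n$ is a perfect square, in which case the divisor $\sqrt n$ is unpaired and $\tau(n)\le 2\sqrt n-1<2\sqrt n$ instead. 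Hence $|S(n)|<\tfrac23\sqrt n$, so $S(n)>-\tfrac23\sqrt n$, and substituting into the displayed inequality for $g(n)$ gives the lemma.

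I do not anticipate any real obstacle. The substantive content is all upstream: the identity $f(n)=\sum_{d\mid n}g(d)$, the M\"obius inversion, and the Euler-product evaluation $\prod_p(1-p^{-2})^{-1}=\sum_{m\ge 1}m^{-2}=\pi^2/6$. The only step in the proof of this lemma itself that warrants a second glance is the divisor count $\tau(n)<2\sqrt n$, and that is entirely routine once the perfect-square case is noted.
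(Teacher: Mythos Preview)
Your proposal is correct and follows exactly the paper's own argument: bound the M\"obius-inverted error sum by $\tfrac13$ times the number of divisors, and then invoke the elementary bound $\tau(n)<2\sqrt{n}$. The only difference is that you spell out the proof of $\tau(n)<2\sqrt{n}$ (including the perfect-square case), whereas the paper simply asserts it.
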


The next lemma is the principal step in our proof. It gives information about the maximal overgroups in
$G$, of an element of the form $\diag({\Gamma_a}^{\alpha-2},\Gamma_{b},\Gamma_{c})$, where $a$, $b$ and $c$ are coprime.
The proof relies on knowledge of the subgroups of $\GL_n(q)$ whose order is divisible by a primitive prime divisor of
$q^d-1$, where $d> n/3$. An account of such subgroups is given in the doctoral dissertation of Joseph DiMuro \cite{D};
this work extends the classification of \cite{GPPS},
which deals with the case $d>n/2$.

\begin{lemma}\label{lem:overgroups2}
Let $\nu(n) \ge 3$ and let $n \ge 98$. For $\lambda = (a,b,c) \in P(n)$, with $a \le b \le c$, and with
$a,b,c$ coprime,
let $g=g_{\lambda} = \diag({\Gamma_a}^{\alpha-2},\Gamma_{b},\Gamma_{c})$.
Then every maximal overgroup $M$ of $g$ in $G$ is a subspace stabilizer,
except possibly in the following cases.
\begin{enumerate}
\item[(i)] $2|n$, $c = n/2$, and $M \cong G \cap (\GL_{n/2}(q) \wr
C_{2})$;
\item[(ii)] $4|n$, $(a,b,c) =(2,(n-2)/2,(n-2)/2)$, and either $M \cong G \cap
(\GL_{n/2}(q) \wr C_{2})$, or $M \cong G \cap (\GL_{n/2}(q) \circ
\GL_{2}(q))$. (Here $\circ$ is used to denote a central product.)
\end{enumerate}
\end{lemma}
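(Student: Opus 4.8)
The plan is to analyze the maximal overgroups $M$ of $g=g_\lambda=\diag({\Gamma_a}^{\alpha-2},\Gamma_b,\Gamma_c)$ by applying DiMuro's extension of the Guralnick--Penttila--Praeger--Saxl classification. Since $a\le b\le c$ and $a+b+c=n$ with $a,b,c$ coprime, we have $c\ge n/3$, so $c>n/3$ strictly unless $a=b=c=n/3$, which is impossible by coprimality (as $\nu(n)\ge 3$ forces $n>3$). Hence the summand $\Gamma_c$ has order divisible by a primitive prime divisor $r$ of $q^c-1$ with $c>n/3$ (Zsigmondy applies since $n\ge 98$ rules out the small exceptional pairs). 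Therefore $|M|$ is divisible by $r$, and $M$ appears in DiMuro's lists. The first step is to eliminate all the ``small'' sporadic and cross-characteristic examples in those tables: the hypothesis $\nu(n)\ge 3$ (hence $n$ not a prime power, not $2\cdot\text{prime}$, etc.) together with $n\ge 98$ should rule these out exactly as in the proof of Lemma~\ref{lem:overgroups}, where such cases were excluded by $n$ having three prime divisors and by order comparisons against $q^{n-2j-1}-1$.

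Next I would deal with the geometric (Aschbacher-type) families. Classical-type overgroups ($\Sp$, $\O$, $\U$, or subfield subgroups) are excluded because $\F_q\langle g\rangle$ decomposes as exactly three irreducible summands of pairwise distinct dimensions (when $a<b<c$) or because the eigenvalue/determinant structure of $g$ is incompatible with a bilinear or sesquilinear form preserved up to scalars; this is the same kind of argument as ``$T_j$ has no overgroups of classical type'' in Lemma~\ref{lem:overgroups}. Extension field subgroups $\efs(p)$ are ruled out because the three summand degrees $a,b,c$ are coprime, so no common prime $p$ divides all of them; thus $g$ cannot lie in any $\GL_{n/p}(q^p)$ (compare Lemma~\ref{lem:subgroups}~(2) and the argument in Lemma~\ref{lem:lowerboundexception}). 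Tensor-product and tensor-induced subgroups force $n$ to factor as a product $n=n_1n_2$ with both factors exceeding $1$ and impose that $g$ be a Kronecker product or have a matching block structure; the coprimality of $a,b,c$ and the irreducibility of the summands are incompatible with this, since a tensor decomposition of the ambient module would have to be respected by each $g$-invariant summand. This leaves only the imprimitive family $\GL_{n/m}(q)\wr C_m$ and the ``$\GL$-reducible'' subspace stabilizers.

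For the imprimitive case $M\le G\cap(\GL_{n/m}(q)\wr C_m)$, the element $g$ permutes a direct-sum decomposition $V=V_1\oplus\cdots\oplus V_m$ of equal-dimensional blocks. Since $\F_q\langle g\rangle$ has exactly three irreducible constituents, of dimensions $a$, $b$, $c$, the permutation action of $g$ on the $m$ blocks has cycle structure compatible with these three constituents. A short combinatorial analysis — examining how the three constituent degrees $a\le b\le c$ distribute among the blocks of size $n/m$ — shows that the only possibilities are $m=2$ with the block size $n/2$ equal to $c$ (so $c=n/2$, $\{a,b\}$ partitions the other block, forcing $2\mid n$), or $m=2$ with $a=2$ and $b=c=(n-2)/2$ (so $4\mid n$, and here a central product $\GL_{n/2}(q)\circ\GL_2(q)$ can also arise from the two-dimensional tensor-like structure on the equal summands). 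These are exactly cases (i) and (ii) in the statement; in all other cases the block structure is contradicted and $M$ must be a subspace stabilizer.

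The main obstacle I anticipate is the bookkeeping in DiMuro's tables: unlike \cite{GPPS}, the classification for $d>n/3$ has more families and more sporadic entries, and verifying that every line is killed either by $\nu(n)\ge 3$, by $n\ge 98$, or by an explicit order estimate against $|\Gamma_c|\ge q^c-1>q^{n/3}-1$ is delicate and case-heavy. The second subtlety is distinguishing cases (i) and (ii): one must check carefully that when $a=2$, $b=c=(n-2)/2$, the element $g$ genuinely can (and generically does) lie in both $G\cap(\GL_{n/2}(q)\wr C_2)$ and $G\cap(\GL_{n/2}(q)\circ\GL_2(q))$ — the latter because ${\Gamma_2}^{\alpha-2}$ acts irreducibly on a $2$-space that can play the role of the ``$\GL_2$'' tensor factor paired against a common $(n-2)/2$-dimensional module — whereas for $a\ge 3$ the three distinct irreducible degrees make any such equal-block or tensor structure impossible, so only subspace stabilizers survive.
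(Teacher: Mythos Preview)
Your overall strategy is right --- invoke DiMuro's classification using the primitive prime divisor of $q^c-1$ with $c>n/3$, and then work through the Aschbacher classes --- but there is a genuine gap in your treatment of the tensor product subgroups. You assert that ``coprimality of $a,b,c$ and the irreducibility of the summands are incompatible'' with a tensor decomposition because such a decomposition ``would have to be respected by each $g$-invariant summand'', and you therefore dismiss this class entirely. This is false: $g$-invariant subspaces of $V_1\otimes V_2$ need not be of the form $W_1\otimes W_2$. In fact the central product $\GL_{n/2}(q)\circ\GL_2(q)$ appearing in case~(ii) \emph{is} a tensor product subgroup, not an imprimitive one, and $g$ can lie in it precisely when $(a,b,c)=(2,(n-2)/2,(n-2)/2)$. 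The correct argument for this class is different from what you sketch: one first uses $c>n/3$ to force the smaller tensor factor to have dimension $n_1=2$; then one passes to $h=g^{q^2-1}$, which lies in $1\otimes\GL(V_2)$, and observes that every irreducible $\langle h\rangle$-summand of $V$ occurs with multiplicity at least $2$. This forces $b=c$, and then the $a$-dimensional summand must split into two under $h$, which forces $a=2$. Your attempt to recover the central product from within the imprimitive analysis conflates two distinct Aschbacher classes; the wreath product $\GL_{n/2}(q)\wr C_2$ in case~(ii) does come from the imprimitive class, but the central product does not.

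Two smaller points. First, the classical case needs a real argument, not ``eigenvalue/determinant structure is incompatible'': the mechanism is that elements of symplectic or orthogonal groups are similar to their inverses, and elements of unitary groups to their conjugate-inverses, and one checks that ${\Gamma_c}^{q-1}$ (respectively ${\Gamma_c}^{q+1}$) fails this for $c>2$. Second, the imprimitive analysis is more delicate than ``a short combinatorial analysis'': one introduces the least number $r$ of blocks whose sum contains $V_c$, bounds $r\le 3$ by comparing $(q^m-1)r$ with $q^{n/3}-1$, excludes $r=3$ by a dimension count, and then treats $r=2$ (giving the wreath product half of case~(ii)) and $r=1$ (giving case~(i)) separately.
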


\begin{proof}
We observe that $V$ may be decomposed as $V_a\oplus V_b\oplus V_c$, where $V_a$, $V_b$ and $V_c$ are
$g$-invariant subspaces of dimensions $a$, $b$ and $c$ respectively. The action
of $g$ on each of these summands is irreducible.
It follows that $g$ lies in the stabilizers of proper subspaces of at least $4$ different dimensions; and so $g$
is covered by the class $\sss(k)$ for at least $4$ values of $k$.

Note that $c > n/3$, and that $q^{c}-1$ divides the order of $g$.
Hence a maximal overgroup $M$ of $g$ must belong to one of the classes of groups
mentioned in Section 1.2 of \cite{D}. We observe firstly that
owing to our assumption that $\nu \ge 3$ and $n
\ge 98$, the subgroup $M$ cannot be any of those in Tables 1.1--1.9 of \cite{D};
this immediately rules out several of the Examples listed there.
We shall go through the remaining Examples.
\medskip

\noindent Example 1. \emph{Classical examples}. The determinant of $g$ is a generator of the quotient $G/\SL_n(q)$,
and so $M$ cannot contain $\SL_n(q)$.

Any element of a symplectic or orthogonal group is similar to its own inverse; an element $g$ of a unitary
group is similar to its conjugate-inverse $g^{-\tau}$, where $\tau$ is induced by an involutory field automorphism.
(See \cite{W}, Section 2.6, or (3.7.2) for groups in characteristic $2$.)

If $M$ normalizes a symplectic or orthogonal group $H$, then $g^{q-1}$ lies in $H$ itself, and so
$g^{q-1}$ is similar to its own inverse. Then it is clear that
${\Gamma_c}^{q-1}$ is similar to its own inverse (it does not matter here whether or not $b=c$).
But this cannot be the case since $c>2$.

Similarly, if $M$ normalizes a unitary group $U$ then
$g^{q+1}$ lies in $U$, and it follows that $g^{q+1}$ is similar to its conjugate-inverse. But then it
follows that ${\Gamma_c}^{q+1}$ is similar to its conjugate-inverse, and it is easy to show that this
is not the case.
\medskip

\noindent Example 3. \emph{Imprimitive examples.} Here $M$
preserves a decomposition $V = U_{1} \oplus \cdots \oplus U_{t}$ for $t \ge 2$.
Let $\dim U_{i} = m$, so that $n = mt$. Recall that the $\langle g \rangle$-module $V$ is the
direct sum of $3$ irreducible submodules $V_{a}$, $V_{b}$, $V_{c}$
of dimensions $a$, $b$, $c$ respectively. So $\langle g\rangle$ has at most $3$ orbits on the set of spaces $U_{i}$.


Let $r$ be the smallest integer such that $V_c$ is contained in the direct sum
of $r$ of the spaces $U_{i}$. We observe that $n/3 < c\le rm$, and so $m>n/3r$. Without loss of generality,
we may assume
that $V_{c} \leq W = U_{1} \oplus \cdots \oplus U_{r}$. It is clear that $W$ is $g$-invariant.
Let $\overline{g}$ be the restriction of $g$ to $W$.
Then $\langle \overline{g} \rangle$ acts transitively on $\{ U_{1}, \ldots , U_{r}
\}$. Since $\overline{g}^{\,r}$ acts in the same way on each $U_i$ for $i\le r$,
an upper bound for the order of $\overline{g}$ is $(q^{m}-1)r$.
But since $m\le n/r$, and since $n\ge 98$ by assumption, we see that $(q^m-1)r<q^{n/3}-1$ if $r \ge 4$.
Therefore we must have $r\le 3$.

It follows that $V_c$ is a simple $\F_q\langle \overline{g}^{\,r}\rangle$-module.
Now since $\overline{g}^{\,r}$ commutes with the projections of $W$ onto its summands $U_i$,
we see that at least one of the spaces $U_i$ contains an $\overline{g}^{\,r}$-invariant
subspace of dimension $c$. So $m>n/3$, and hence $r\neq 3$.

Suppose that $r=2$. Since $\overline{g}^{\,r}$ has two fixed spaces of dimension $m$, we see that $b = c = m$, and that
$V_{b} \oplus V_{c} \leq W$. If $W < V$, then $W = V_{b} \oplus V_{c}$. Now we see that $m$ divides each of $a$ and
$b+c=2c$. Since $a$, $b$, $c$ are coprime, it follows that $m=2$. But this implies that $n<6$, which contradicts
the assumption that $n\ge 98$. So we may suppose that $W = V$. Then it is not hard to show that
$V_a$ has two irreducible summands as an $\langle \overline{g}^{\,2} \rangle$-module.
But this can occur only when $a=2$, and this accounts for the first of the exceptional cases of the lemma.

Finally, if $r=1$ then $m\ge c > n/3$, and so $t=2$. It is easy to see, in this case, that $c=m=n/2$, and this accounts
for the second exceptional case of the lemma.

\medskip

\noindent Example 4. \emph{Extension field examples.} If $g$ stabilizes an $\F_{q^r}$-structure
on $V$, then $g^r$ lies in the image of an embedding of $\GL_{n/r}(q^r)$ into $\GL_n(q)$. Now if this is the
case then it is not hard, by considering the degrees of the eigenvalues
of $g$ over the fields $\F_q$ and $\F_{q^r}$, to show that $r$ must divide each of $a,b,c$. But this implies
that $r=1$, since $a,b,c$ are coprime.
\medskip

\noindent Example 5. \emph{Tensor product decomposition examples.} Here $M$ stabilizes a non-trivial tensor
product decomposition $V=V_1\otimes V_2$. The central product $\GL(V_1)\circ \GL(V_2)$ embeds into $\GL_n(q)$,
and $M$ is the intersection of this group with $G$. For $x_1\in \GL(V_1)$ and $x_2\in\GL(V_2)$,
we write $(x_1, x_2)$ for the corresponding element of
$\GL(V_1)\circ\GL(V_2)$.

We shall suppose that $V_1$ and $V_2$ have dimensions~$n_1$
and~$n_2$ respectively, with $n_1\le n_2$. Then since $c>n/3$, it is not hard to see that we have
$n_1 = 2$.


Suppose that $g\in M$, and let $g_1\in \GL(V_1)$ and $g_2\in\GL(V_2)$ be such that $g=(g_1,g_2)$.
Let $h=g^{q^2-1}$.
Since the order of $g$ is coprime with $q$, we see that $g_1^{q^{2}-1}$ is the identity on $V_1$, and so $h=(1,h_2)$ for
some $h_2\in\GL(V)$.

The largest dimension of an irreducible
$\langle h \rangle$-subspace of $V$ is $c$, and there are at most $2$
such subspaces. We obtain the $\langle h \rangle$-subspace
decomposition of $V$ up to isomorphism by taking two copies of each summand
of the $\langle h_2 \rangle$-subspace decomposition of $V_{2}$. It follows that there must be at least two
summands of dimension $c$, and hence that
$b=c$ and that $a < b$. It follows also that the $a$ dimensional summand of $g$ splits into two summands
as an $\F_q\langle h\rangle$-module. But it is not hard to see that this can occur only if
$a = 2$, and so we have $a = 2$ and $b = c = (n-2)/2$. This is the second exceptional case of the lemma.
\medskip

\noindent Example 6. \emph{Subfield examples.} These cannot occur, since $g$ is built
up using Singer cycles, which do not preserve any proper subfield structure.
\medskip

\noindent Example 7. \emph{Symplectic type examples.} This class of groups exists only in prime-
power dimension, and cannot occur in the cases we are considering since
we have assumed that $\nu\ge 3$.
\medskip

\noindent Example 8.(a) \emph{Permutation module examples.}
In this case $S$ is an alternating group $A_m$ for some $m
\ge 5$. Then it is known that the order of an element
in $M$ is at most $(q-1)\cdot e^{\theta \sqrt{m \log m}}$ where $\theta =
1.05314$, by a result of Massias \cite{M}. Here $n =
m-1$ or $m-2$. But a routine calculation shows that the inequality  $e^{\theta \sqrt{(n+2) \log (n+2)}} <
(q^{n/3}-1)/(q-1)$ holds for all $q\ge 2$, and for all $n \ge 98$. (This inequality fails when $q=2$ and $n=97$.)
\medskip

\noindent Example 11. \emph{Cross-characteristic groups of Lie type.}
The examples not yet ruled out are contained in Table 1.10 of \cite{D}.
But every element of $M$ has order less than $n^{3}$, which is less than $q^{n/3}-1$ for $n \ge 98$.
\end{proof}
\bigskip

We are now in a position to complete the proof of Theorem \ref{thm:pibound}.

\begin{proof}
Define a set $\Omega$ of classes of $G$ by
\[
\Omega = \{[\Gamma_{n}^{\alpha+q-1}]\} \cup \{[g_{\lambda}] : \lambda \in P(n) \}.
\]
Let $\mathcal{C}$ be a set of conjugacy classes of subgroups of $G$ which covers $\Omega$,
of the smallest size such that this is possible.
Then clearly $|\mathcal{C}|\le \gamma(G)$.
By the theorem of Kantor \cite{K} mentioned in the proof of Lemma \ref{lem:overgroups} above, and
by Lemma \ref{lem:overgroups2}, we see that $\mathcal{C}$ must contain a single class of extension field subgroups.
If $n\ge 98 $ and $\nu\ge 3$ then each remaining elements of $\mathcal{C}$ is either a
class of subspace stabilizers, or else one of the classes of subgroups mentioned in
the exceptional cases of Lemma \ref{lem:overgroups}.
Each subspace stabilizer contains at most $n/2$ of the elements $g_{\lambda}$, and each of the exceptional classes contains
at most $n/4$. Now, using Lemma \ref{lemma:g(n)}, we see that
\[
\gamma(G)\ \ge\ |\mathcal{C}|\ \ge\ 1+\frac{2g(n)}{n}\ >\ \frac{n}{\pi^2},
\]
as required for the theorem.

To remove the conditions that $n\ge 98 $ and that $\nu\ge 3$, it is enough to observe that the lower
bound for $\kappa(G)$ given by Theorem \ref{thm:lowerbounds} is larger than $n/\pi^{2}$ in any case where
either of these conditions fails.
\end{proof}

\end{document}